\DeclareMathAlphabet{\mathbb}{U}{msb}{m}{n}
\newtheorem{cor}[subsection]{Corollary}
\newtheorem{lem}[subsection]{Lemma}
\newtheorem{prop}[subsection]{Proposition}
\newtheorem{thm}{Theorem}
\newtheorem*{thm*}{Theorem}
\newtheorem*{conj*}{Conjecture}
\newtheorem*{quest*}{Problem}
\numberwithin{equation}{thm}
\theoremstyle{definition}
\newtheorem{remark}[subsection]{Remark}
\theoremstyle{remark}
\renewcommand{\eqref}[1]{(\ref{#1})}
\tikzset{shorten <>/.style={shorten >=#1,shorten <=#1}}
\newcommand{\nc}{\newcommand}
\nc{\renc}{\renewcommand}
\nc{\ssec}{\subsection}
\nc{\sssec}{\subsubsection}
\nc{\on}{\operatorname}
\nc{\term}[1]{#1\xspace}
\DeclareMathSymbol{A}{\mathalpha}{operators}{`A}
\DeclareMathSymbol{B}{\mathalpha}{operators}{`B}
\DeclareMathSymbol{C}{\mathalpha}{operators}{`C}
\DeclareMathSymbol{D}{\mathalpha}{operators}{`D}
\DeclareMathSymbol{E}{\mathalpha}{operators}{`E}
\DeclareMathSymbol{F}{\mathalpha}{operators}{`F}
\DeclareMathSymbol{G}{\mathalpha}{operators}{`G}
\DeclareMathSymbol{H}{\mathalpha}{operators}{`H}
\DeclareMathSymbol{I}{\mathalpha}{operators}{`I}
\DeclareMathSymbol{J}{\mathalpha}{operators}{`J}
\DeclareMathSymbol{K}{\mathalpha}{operators}{`K}
\DeclareMathSymbol{L}{\mathalpha}{operators}{`L}
\DeclareMathSymbol{M}{\mathalpha}{operators}{`M}
\DeclareMathSymbol{N}{\mathalpha}{operators}{`N}
\DeclareMathSymbol{O}{\mathalpha}{operators}{`O}
\DeclareMathSymbol{P}{\mathalpha}{operators}{`P}
\DeclareMathSymbol{Q}{\mathalpha}{operators}{`Q}
\DeclareMathSymbol{R}{\mathalpha}{operators}{`R}
\DeclareMathSymbol{S}{\mathalpha}{operators}{`S}
\DeclareMathSymbol{T}{\mathalpha}{operators}{`T}
\DeclareMathSymbol{U}{\mathalpha}{operators}{`U}
\DeclareMathSymbol{V}{\mathalpha}{operators}{`V}
\DeclareMathSymbol{W}{\mathalpha}{operators}{`W}
\DeclareMathSymbol{X}{\mathalpha}{operators}{`X}
\DeclareMathSymbol{Y}{\mathalpha}{operators}{`Y}
\DeclareMathSymbol{Z}{\mathalpha}{operators}{`Z}
\nc{\sA}{\ensuremath{\mathcal{A}}\xspace}
\nc{\sB}{\ensuremath{\mathcal{B}}\xspace}
\nc{\sC}{\ensuremath{\mathcal{C}}\xspace}
\nc{\sD}{\ensuremath{\mathcal{D}}\xspace}
\nc{\sE}{\ensuremath{\mathcal{E}}\xspace}
\nc{\sF}{\ensuremath{\mathcal{F}}\xspace}
\nc{\sG}{\ensuremath{\mathcal{G}}\xspace}
\nc{\sH}{\ensuremath{\mathcal{H}}\xspace}
\nc{\sI}{\ensuremath{\mathcal{I}}\xspace}
\nc{\sJ}{\ensuremath{\mathcal{J}}\xspace}
\nc{\sK}{\ensuremath{\mathcal{K}}\xspace}
\nc{\sL}{\ensuremath{\mathcal{L}}\xspace}
\nc{\sM}{\ensuremath{\mathcal{M}}\xspace}
\nc{\sN}{\ensuremath{\mathcal{N}}\xspace}
\nc{\sO}{\ensuremath{\mathcal{O}}\xspace}
\nc{\sP}{\ensuremath{\mathcal{P}}\xspace}
\nc{\sQ}{\ensuremath{\mathcal{Q}}\xspace}
\nc{\sR}{\ensuremath{\mathcal{R}}\xspace}
\nc{\sS}{\ensuremath{\mathcal{S}}\xspace}
\nc{\sT}{\ensuremath{\mathcal{T}}\xspace}
\nc{\sU}{\ensuremath{\mathcal{U}}\xspace}
\nc{\sV}{\ensuremath{\mathcal{V}}\xspace}
\nc{\sW}{\ensuremath{\mathcal{W}}\xspace}
\nc{\sX}{\ensuremath{\mathcal{X}}\xspace}
\nc{\sY}{\ensuremath{\mathcal{Y}}\xspace}
\nc{\sZ}{\ensuremath{\mathcal{Z}}\xspace}
\nc{\bA}{\ensuremath{\mathbf{A}}\xspace}
\nc{\bB}{\ensuremath{\mathbf{B}}\xspace}
\nc{\bC}{\ensuremath{\mathbf{C}}\xspace}
\nc{\bD}{\ensuremath{\mathbf{D}}\xspace}
\nc{\bE}{\ensuremath{\mathbf{E}}\xspace}
\nc{\bF}{\ensuremath{\mathbf{F}}\xspace}
\nc{\bG}{\ensuremath{\mathbf{G}}\xspace}
\nc{\bH}{\ensuremath{\mathbf{H}}\xspace}
\nc{\bI}{\ensuremath{\mathbf{I}}\xspace}
\nc{\bJ}{\ensuremath{\mathbf{J}}\xspace}
\nc{\bK}{\ensuremath{\mathbf{K}}\xspace}
\nc{\bL}{\ensuremath{\mathbf{L}}\xspace}
\nc{\bM}{\ensuremath{\mathbf{M}}\xspace}
\nc{\bN}{\ensuremath{\mathbf{N}}\xspace}
\nc{\bO}{\ensuremath{\mathbf{O}}\xspace}
\nc{\bP}{\ensuremath{\mathbf{P}}\xspace}
\nc{\bQ}{\ensuremath{\mathbf{Q}}\xspace}
\nc{\bR}{\ensuremath{\mathbf{R}}\xspace}
\nc{\bS}{\ensuremath{\mathbf{S}}\xspace}
\nc{\bT}{\ensuremath{\mathbf{T}}\xspace}
\nc{\bU}{\ensuremath{\mathbf{U}}\xspace}
\nc{\bV}{\ensuremath{\mathbf{V}}\xspace}
\nc{\bW}{\ensuremath{\mathbf{W}}\xspace}
\nc{\bX}{\ensuremath{\mathbf{X}}\xspace}
\nc{\bY}{\ensuremath{\mathbf{Y}}\xspace}
\nc{\bZ}{\ensuremath{\mathbf{Z}}\xspace}
\nc{\dA}{\ensuremath{\mathds{A}}\xspace}
\nc{\dB}{\ensuremath{\mathds{B}}\xspace}
\nc{\dC}{\ensuremath{\mathds{C}}\xspace}
\nc{\dD}{\ensuremath{\mathds{D}}\xspace}
\nc{\dE}{\ensuremath{\mathds{E}}\xspace}
\nc{\dF}{\ensuremath{\mathds{F}}\xspace}
\nc{\dG}{\ensuremath{\mathds{G}}\xspace}
\nc{\dH}{\ensuremath{\mathds{H}}\xspace}
\nc{\dI}{\ensuremath{\mathds{I}}\xspace}
\nc{\dJ}{\ensuremath{\mathds{J}}\xspace}
\nc{\dK}{\ensuremath{\mathds{K}}\xspace}
\nc{\dL}{\ensuremath{\mathds{L}}\xspace}
\nc{\dM}{\ensuremath{\mathds{M}}\xspace}
\nc{\dN}{\ensuremath{\mathds{N}}\xspace}
\nc{\dO}{\ensuremath{\mathds{O}}\xspace}
\nc{\dP}{\ensuremath{\mathds{P}}\xspace}
\nc{\dQ}{\ensuremath{\mathds{Q}}\xspace}
\nc{\dR}{\ensuremath{\mathds{R}}\xspace}
\nc{\dS}{\ensuremath{\mathds{S}}\xspace}
\nc{\dT}{\ensuremath{\mathds{T}}\xspace}
\nc{\dU}{\ensuremath{\mathds{U}}\xspace}
\nc{\dV}{\ensuremath{\mathds{V}}\xspace}
\nc{\dW}{\ensuremath{\mathds{W}}\xspace}
\nc{\dX}{\ensuremath{\mathds{X}}\xspace}
\nc{\dY}{\ensuremath{\mathds{Y}}\xspace}
\nc{\dZ}{\ensuremath{\mathds{Z}}\xspace}
\nc{\bbA}{\ensuremath{\mathbb{A}}\xspace}
\nc{\bbB}{\ensuremath{\mathbb{B}}\xspace}
\nc{\bbC}{\ensuremath{\mathbb{C}}\xspace}
\nc{\bbD}{\ensuremath{\mathbb{D}}\xspace}
\nc{\bbE}{\ensuremath{\mathbb{E}}\xspace}
\nc{\bbF}{\ensuremath{\mathbb{F}}\xspace}
\nc{\bbG}{\ensuremath{\mathbb{G}}\xspace}
\nc{\bbH}{\ensuremath{\mathbb{H}}\xspace}
\nc{\bbI}{\ensuremath{\mathbb{I}}\xspace}
\nc{\bbJ}{\ensuremath{\mathbb{J}}\xspace}
\nc{\bbK}{\ensuremath{\mathbb{K}}\xspace}
\nc{\bbL}{\ensuremath{\mathbb{L}}\xspace}
\nc{\bbM}{\ensuremath{\mathbb{M}}\xspace}
\nc{\bbN}{\ensuremath{\mathbb{N}}\xspace}
\nc{\bbO}{\ensuremath{\mathbb{O}}\xspace}
\nc{\bbP}{\ensuremath{\mathbb{P}}\xspace}
\nc{\bbQ}{\ensuremath{\mathbb{Q}}\xspace}
\nc{\bbR}{\ensuremath{\mathbb{R}}\xspace}
\nc{\bbS}{\ensuremath{\mathbb{S}}\xspace}
\nc{\bbT}{\ensuremath{\mathbb{T}}\xspace}
\nc{\bbU}{\ensuremath{\mathbb{U}}\xspace}
\nc{\bbV}{\ensuremath{\mathbb{V}}\xspace}
\nc{\bbW}{\ensuremath{\mathbb{W}}\xspace}
\nc{\bbX}{\ensuremath{\mathbb{X}}\xspace}
\nc{\bbY}{\ensuremath{\mathbb{Y}}\xspace}
\nc{\bbZ}{\ensuremath{\mathbb{Z}}\xspace}
\nc{\mrm}[1]{\ensuremath{\mathrm{#1}}\xspace}
\nc{\mbf}[1]{\ensuremath{\mathbf{#1}}\xspace}
\nc{\mcal}[1]{\ensuremath{\mathcal{#1}}\xspace}
\nc{\msc}[1]{\ensuremath{\mathscr{#1}}\xspace}
\renc{\bar}[1]{\overline{#1}}
\nc{\sub}{\subset}
\nc{\too}{\longrightarrow}
\nc{\hook}{\hookrightarrow}
\nc*{\hooklongrightarrow}{\ensuremath{\lhook\joinrel\relbar\joinrel\rightarrow}}
\nc{\hooklong}{\hooklongrightarrow}
\nc{\twoheadlongrightarrow}{\relbar\joinrel\twoheadrightarrow}
\nc{\shiso}{\approx}
\nc{\isoto}{\xrightarrow{\sim}}
\nc{\isofrom}{\xleftarrow{\sim}}
\renc{\ge}{\geqslant}
\renc{\le}{\leqslant}
\renc{\geq}{\geqslant}
\renc{\leq}{\leqslant}
\nc{\id}{\mathrm{id}}
\nc{\can}{\mathrm{can}}
\DeclareMathOperator{\Hom}{\mathrm{Hom}}
\nc{\uHom}{\underline{\smash{\Hom}}}
\DeclareMathOperator{\End}{\mathrm{End}}
\DeclareMathOperator{\Map}{\mathrm{Map}}
\nc{\Pre}{\mathrm{PSh}{}}
\nc{\uEnd}{\underline{\smash{\End}}}
\renc{\lim}{\operatorname*{lim}}
\nc{\colim}{\operatorname*{colim}}
\nc{\Cofib}{\on{Cofib}}
\nc{\Fib}{\on{Fib}}
\nc{\initial}{\varnothing}
\nc{\op}{\mathrm{op}}
\nc{\bDelta}{\mbf{\Delta}}
\nc{\DM}{\mbf{DM}}
\nc{\eff}{\mathrm{eff}}
\nc{\veff}{\mathrm{veff}}
\nc{\hzmw}{\mathrm{H}\tilde{\Z}}
\nc{\hz}{\mathrm{H}\Z}
\nc{\cyc}{{\mrm{cyc}}}
\nc{\betah}{{\tilde{\beta}}}
\nc{\corr}{{\on{corr}}}
\nc{\fet}{{\mrm{f\acute et}}}
\nc{\fsyn}{{\mrm{fsyn}}}
\nc{\fflat}{{\mrm{fflat}}}
\nc{\syn}{{\mrm{syn}}}
\nc{\Gor}{{\mrm{Gor}}}
\nc{\MU}{{\mrm{MU}}}
\nc{\W}{{\mrm{W}}}
\nc{\gor}{{\mrm{gor}}}
\nc{\Pic}{\mrm{Pic}}
\nc{\perf}{\mrm{perf}}
\nc{\oblv}{\on{oblv}}
\nc{\exact}{\on{exact}}
\nc{\F}{{\mathcal{F}}}
\nc{\Aa}{{\mathcal{A}}}
\nc{\FF}{{\mathbb{F}}}
\nc{\clopen}{{\mrm{clopen}}}
\nc{\B}{\mrm{B}}
\nc{\D}{\mrm{D}}
\nc{\Ss}{\mathbb{S}}
\nc{\Fin}{\on{Fin}}
\nc{\Cut}{\on{Cut}}
\nc{\Cart}{\on{Cart}}
\nc{\pairs}{\mathsf{pairs}}
\nc{\Pairs}{\mathrm{Pair}}
\nc{\Trip}{\mathrm{Trip}}
\nc{\Lab}{\mathrm{Lab}}
\nc{\coCart}{\mathrm{coCart}}
\nc{\RKE}{\mathrm{RKE}}
\nc{\strict}{\mathrm{strict}}
\nc{\Emb}{\mathrm{Emb}}
\nc{\EMB}{\mathcal{E}\mathrm{mb}}
\nc{\Split}{\mathrm{Split}}
\nc{\Set}{\mathrm{Set}}
\nc{\sSets}{\mathrm{sSets}}
\nc{\pb}{\mathrm{pb}}
\nc{\lci}{\mathrm{lci}}
\nc{\fib}{\mathrm{fib}}
\nc{\cofib}{\mathrm{cofib}}
\nc{\diff}{\mrm{diff}}
\nc{\gp}{\mrm{gp}}
\nc{\chr}{\mrm{char}}
\nc{\mgp}{\mrm{mot-gp}}
\nc{\FSyn}{\mrm{FSyn}}
\nc{\FFlat}{{\mrm{FFlat}}}
\nc{\FEt}{\mrm{FEt}}
\nc{\Spc}{\mrm{Spc}}
\nc{\Ob}{\mrm{Ob}}
\nc{\Spt}{\mrm{Spt}}
\nc{\T}{\bT}
\nc{\suspinf}{\Sigma^\infty}
\nc{\h}{\mrm{h}}
\nc{\uhom}{\underline{\mathrm{Hom}}}
\nc{\umap}{\underline{\mathrm{Maps}}}
\renc{\H}{\bH}
\nc{\Einfty}{{\mathbb{E}_\infty}}
\nc{\Eone}{{\sE_1}}
\nc{\Stab}{\mrm{Stab}}
\nc{\lax}{{\mrm{lax}}}
\nc{\cocart}{{\mrm{cocart}}}
\nc{\Sch}{\mrm{Sch}}
\nc{\CH}{\mrm{CH}}
\nc{\dSch}{\mrm{dSch}}
\nc{\Aff}{\mrm{Aff}}
\nc{\SmAff}{\mrm{SmAff}}
\nc{\dAff}{\mrm{dAff}}
\nc{\Fr}{\on{Fr}}
\nc{\A}{\mathbf{A}}
\nc{\N}{\mathbf{N}}
\nc{\Z}{\mathbb{Z}}
\nc{\Q}{\mathbf{Q}}
\nc{\GW}{\mathbf{GW}}
\nc{\GWspace}{\mathcal{GW}}
\nc{\KGW}{\mathbb G\mathrm W}
\nc{\GWspectrum}{\mathrm{GW}}
\nc{\uGW}{\underline{\mathrm{GW}}}
\nc{\uW}{\underline{\mathrm{W}}}
\nc{\uZ}{\underline{\mathbf{Z}}}
\nc{\Oo}{\mathcal{O}} 
\nc{\red}{{\on{red}}}
\nc{\Voev}{{\on{Voev}}}
\nc{\Corr}{\mrm{Corr}}
\nc{\Span}{\mathbf{Corr}}
\nc{\Gap}{\mrm{Gap}}
\nc{\Filt}{\mrm{Filt}}
\nc{\Corrfr}{\Corr^{\fr}}
\nc{\Corrvfr}{\Corr^{\Vfr}}
\nc{\Spec}{\on{Spec}}
\nc{\Sm}{\mrm{Sm}}
\nc{\QSm}{\mrm{QSm}}
\nc{\Gm}{\mathbf{G}_{\mrm{m}}}
\renc{\P}{\bP}
\nc{\Nis}{\mathrm{Nis}}
\nc{\Zar}{\mathrm{Zar}}
\nc{\et}{\mathrm{\acute et}}
\nc{\all}{\mathrm{all}}
\nc{\fold}{\mathrm{fold}}
\nc{\Fun}{\mathrm{Fun}}
\nc{\Ho}{\mathrm{Ho}}
\nc{\Segal}{\mathrm{Segal}}
\nc{\Mon}{\mrm{Mon}{}}
\nc{\Ab}{\mrm{Ab}}
\nc{\Gr}{\mrm{Gr}}
\nc{\GrO}{\mrm{GrO}}
\nc{\Sh}{\on{Sh}}
\nc{\M}{\mrm{M}}
\nc{\Lhtp}{L_{\A^1}}
\nc{\Lmot}{L_{\mrm{mot}}}
\nc{\mot}{\mrm{mot}}
\nc{\SH}{\mbf{SH}}
\nc{\RR}{\mbf{R}}
\nc{\CC}{\mbf{C}}
\nc{\Mod}{\mrm{Mod}}
\nc{\QCoh}{\mrm{QCoh}}
\nc{\PSh}{\mrm{PSh}}
\nc{\MonUnit}{\mbf{1}}
\nc{\tr}{\on{tr}}
\nc{\vop}{\mrm{vop}}
\nc{\fr}{{\on{fr}}}
\nc{\Ar}{\mrm{Ar}}
\nc{\Vfr}{\on{Vfr}}
\nc{\frdiff}{{\on{frdiff}}}
\nc{\frGys}{\on{frGys}}
\nc{\SHfr}{\SH^{\fr}}
\nc{\SHfrdiff}{\SH^{\frdiff}}
\nc{\SHfrGys}{\SH^{\frGys}}
\nc{\InftyCat}{\infty\textnormal{-}\mrm{Cat}}
\nc{\TriCat}{\mathrm{TriCat}}
\nc{\Cat}{\mathrm{1\textnormal{-}Cat}}
\nc{\Th}{\on{Th}}
\nc{\CMon}{\mrm{CMon}{}}
\nc{\CAlg}{\mrm{CAlg}{}}
\nc{\MGL}{\mrm{MGL}}
\nc{\PMGL}{\mathrm{PMGL}}
\nc{\KGL}{\mrm{KGL}}
\nc{\kgl}{\mrm{kgl}}
\nc{\KQ}{\mrm{KQ}}
\nc{\kq}{\mrm{kq}}
\nc{\KSp}{\mrm{KSp}}
\nc{\ksp}{\mrm{ksp}}
\nc{\MSL}{\mrm{MSL}}
\nc{\MSp}{\mrm{MSp}}
\nc{\Seg}{\mrm{Seg}{}}
\nc{\Tw}{\mrm{Tw}}
\nc{\sslash}{/\mkern-6mu/}
\nc{\PrL}{\mrm{Pr}^\mrm{L}}
\nc{\PrR}{\mrm{Pr}^\mrm{R}}
\nc{\pr}{\mrm{pr}}
\nc{\efr}{\mrm{efr}}
\nc{\nfr}{\mrm{nfr}}
\nc{\dfr}{\mrm{fr}}
\nc{\tfr}{\mrm{tfr}}
\nc{\Vect}{\mrm{Vect}}
\nc{\sVect}{\mrm{sVect}}
\nc{\fix}{\mrm{fix}}
\nc{\Hilb}{\mathrm{Hilb}}
\nc{\flci}{\mathrm{flci}}
\nc{\Isom}{\mathrm{Isom}}
\nc{\GL}{\mathrm{GL}}
\nc{\BGL}{\mathrm{BGL}}
\nc{\SL}{\mathrm{SL}}
\nc{\Sp}{\mathrm{Sp}}
\nc{\fin}{\mathrm{fin}}
\nc{\cl}{\mathrm{cl}}
\nc{\cn}{\mathrm{cn}}
\nc{\sm}{\mathrm{sm}}
\nc{\heart}{\heartsuit}
\renc{\o}{\mrm{or}}
\nc{\GWpsh}{\mrm{GW}}
\nc{\ev}{\mrm{ev}}
\nc{\Ann}{\mrm{Ann}}
\nc{\FSYN}{\mathcal{FS}\mrm{yn}}
\nc{\FFLAT}{\mathcal{FF}\mrm{lat}}
\nc{\mrk}{\mrm{mrk}}%
\nc{\FFmrk}{\mathcal{FF}\mrm{lat}^{\mrk}}
\nc{\FFnu}{\mathcal{FF}\mrm{lat}^{\mrm{nu}}}
\nc{\FFbas}{\mathcal{FF}\mrm{lat}^{\mrm{bas}}}
\nc{\FQSM}{\mathcal{FQS}\mathrm{m}}
\nc{\Quot}{\mathrm{Quot}}    
\nc{\COH}{\mathcal{C}\mathrm{oh}}
\let\phi\varphi
\nc{\Rees}{\mrm{Rees}}
\nc{\robber}{\mathcal{R}}%
\nc{\mv}{\mrm{mv}}
\nc{\const}{\mrm{const}}
\nc{\robbermv}{\robber^{\mv}}
\nc{\robberconst}{\robber^{\const}}
\nc{\robbernot}{\robber_0}%
\nc{\sectionmv}{i^{\mv}}%
\nc{\sectionconst}{i^{\const}}%
\nc{\sectionnot}{i}%
\nc{\st}{\mathrm{st}}
\nc{\FGor}{{\mrm{FGor}}}     
\nc{\fgor}{{\mrm{fgor}}}
\nc{\Aug}{{\mrm{Aug}}}  
\nc{\Uni}{{\mrm{Uni}}}  
\nc{\ori}{{\mrm{or}}}        
\nc{\sym}{{\mrm{sym}}}  
\nc{\alt}{{\mrm{alt}}}   
\nc{\nonu}{{\mrm{nu}}}  
\nc{\Ima}{\operatorname{Im}} 
\nc{\Hyp}{\operatorname{Hyp}}
\nc{\tel}{\mathrm{tel}}
\nc{\FQSm}{\mathrm{FQSm}}
\nc{\Perf}{\mathrm{Perf}}
\nc{\inftyCat}{\term{$\infty$-category}}
\nc{\inftyCats}{\term{$\infty$-categories}}
\nc{\inftyOneCat}{\term{$(\infty,1)$-category}}
\nc{\inftyOneCats}{\term{$(\infty,1)$-categories}}
\nc{\inftyGrpd}{\term{$\infty$-groupoid}}
\nc{\inftyGrpds}{\term{$\infty$-groupoids}}
\nc{\inftyTop}{\term{$\infty$-topos}}
\nc{\inftyTops}{\term{$\infty$-toposes}}
\nc{\inftyTwoCat}{\term{$(\infty,2)$-category}}
\nc{\inftyTwoCats}{\term{$(\infty,2)$-categories}}
\title{An alternative to spherical Witt vectors}
\author{Thomas Nikolaus}
\address{FB Mathematik und Informatik \\
Universit\"at M\"unster \\
Germany} 
\email{\href{mailto:nikolaus@uni-muenster.de}{nikolaus@uni-muenster.de}}
\urladdr{\url{https://wwwmath.uni-muenster.de/u/nikolaus}}
\author{Maria Yakerson}
\address{CNRS \& IMJ-PRG\\
Paris\\
France}
\email{\href{mailto:yakerson@imj-prg.fr}{yakerson@imj-prg.fr}}
\urladdr{\url{https://www.muramatik.com}}
\date{\today}
\begin{document}

\begin{abstract}
We give a direct construction of the ring spectrum of spherical Witt vectors of a perfect \mbox{$\FF_p$-algebra $R$} as the completion of the spherical monoid algebra $\Ss [R]$ of  the multiplicative monoid $(R,\cdot)$ at the ideal $I = \fib(\Ss[R] \to R)$. This generalizes a construction of Cuntz and Deninger. We also use this to give a description of the category of $p$-complete modules over the spherical Witt vectors and a universal property for spherical Witt vectors as an $\mathbb{E}_1$-ring. 
\end{abstract}

\maketitle

%
%


The ring of Witt vectors\footnote{Throughout, we refer specifically to the \textit{$p$-typical} Witt vectors.} associated to a commutative ring $R$ is a classical algebraic object. When $R$ is the finite field  $\FF_p$, its ring of Witt vectors is the ring of $p$-adic integers $\Z_p$. More generally, the construction of the ring of  Witt vectors provides a bridge from characteristic $p$ to mixed characteristic $(0,p)$, which makes it an important tool in arithmetic geometry. An element in the ring of Witt vectors $\W(R)$ is given by an infinite sequence of elements in $R$, with addition and multiplication of such sequences being defined not componentwise, but in a non-trivial way, using certain universal polynomials. 

When $R$ is a perfect $\FF_p$-algebra, a slick  construction of the  ring $\W(R)$ was obtained by Cuntz and Deninger in~\cite{CuntzDeningerWitt}.
They prove in this case that $\W(R)$ is isomorphic to a completion of the monoid ring $\Z[R]$, with respect to the multiplicative monoid of $R$.
 The completion is taken at the ideal given by the kernel of the canonical ring map $\Z[R] \to R$.  The immediate benefits of this approach are that addition and multiplication are straightforward, and the multiplicative lift\footnote{This is typically called Teichmüller lift, but we will not use this name due to Teichmüller's involvement in the Nazi regime} becomes simply the composition $R \to \Z [R] \to W(R)$.

In higher algebra (a.k.a. brave new algebra) one often replaces the ring of integers with the sphere spectrum $\Ss$, which can be thought of as a deformation of $\Z = \pi_0 \Ss$. 
Following this philosophy, Lurie introduced \emph{spherical Witt vectors}, which are a lift of Witt vectors from the ring of $p$-adic integers to the $p$-complete sphere spectrum~\cite{LurieEllipticII}. Spherical Witt vectors are an important ingredient, for example, in the recent work on chromatic homotopy theory by Burklund, Schlank and Yuan~\cite{ChromaticNull}. The definition of spherical Witt vectors is by obstruction theory (see e.g.~\cite[Section 2]{ChromaticNull}) or by the adjoint functor theorem using the universal property (see \cite{Antieau}). We  offer an easy direct construction along the lines of Cuntz and Deninger which morally shows that the spherical Witt vectors are generated by convergent sums of multiplicative lifts:

 \begin{thm}\label{thm:spherical Witt}
Let $R$ be an (ordinary) perfect $\FF_p$-algebra and let $\Ss_{\W(R)}$ be its $\Einfty$-ring spectrum of spherical Witt vectors. Then
there is a canonical equivalence of $\Einfty$-rings induced by the multiplicative lift:
\[
\Ss_{\W(R)} \simeq  \Ss [R]^{\wedge}_I 
\] 
 where the right-hand side is the  completion of the spherical monoid ring $\Ss [R]$ of the  multiplicative monoid of $R$ with respect to the ideal $I = \fib(\Ss [R] \to R)$.
 \end{thm}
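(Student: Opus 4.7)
The plan is to construct a canonical comparison map $\Ss[R]^{\wedge}_I \to \Ss_{\W(R)}$ and to check it is an equivalence by reducing modulo $p$. The multiplicative (``Teichmüller'') lift $R \to \W(R) = \pi_{0}\Ss_{\W(R)}$ is a homomorphism of multiplicative monoids $(R,\cdot) \to (\Omega^{\infty}\Ss_{\W(R)},\cdot)$, and by adjunction it corresponds to an $\Einfty$-ring map $\Ss[R] \to \Ss_{\W(R)}$. Since the composite $\Ss[R] \to \Ss_{\W(R)} \to \Ss_{\W(R)}/p \simeq R$ equals the augmentation $\Ss[R] \to R$, the image of $I$ on $\pi_{0}$ lies in the maximal ideal $p\W(R)$; as $\Ss_{\W(R)}$ is $p$-complete, it is therefore derived $I$-complete as an $\Ss[R]$-module, and the map extends canonically to $A := \Ss[R]^{\wedge}_I \to \Ss_{\W(R)}$.

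Next I would observe that the source $A$ is itself $p$-complete: because $p = p\cdot[1]$ lies in $I$ on $\pi_{0}$ (it maps to $0$ in the $\FF_p$-algebra $R$), derived $I$-completion forces derived $p$-completion. Both sides of the comparison map are then $p$-complete, and it suffices to check the map is an equivalence after base change along $\Ss \to \Ss/p$. Since $p\in I$, this base change commutes with $I$-completion, giving $A/p \simeq (\Ss/p[R])^{\wedge}_{\bar I}$ with $\bar I = \fib(\Ss/p[R] \to R)$. On the other side $\Ss_{\W(R)}/p \simeq R$ by the defining property of spherical Witt vectors. The theorem thus reduces to proving $(\Ss/p[R])^{\wedge}_{\bar I} \simeq R$.

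This mod-$p$ identification is the main step and the main obstacle. At the level of $\pi_{0}$ it is essentially the original Cuntz-Deninger theorem in its mod-$p$ form: $\FF_p[R]^{\wedge}_{\bar I_{0}} \simeq \W(R)/p = R$, where $\bar I_{0} = \ker(\FF_p[R] \to R)$. The genuinely new content is the vanishing of higher homotopy for the spectral completion. Here perfection of $R$ is used crucially: the identity $[x]^{p} = [x^{p}]$ in $\Ss/p[R]$ combined with the existence of unique $p$-th roots in $R$ allows one to show that the $\bar I$-adic powers contract quickly onto a discrete part, and vanishing of $\mathbb{L}_{R/\FF_{p}}$ (another consequence of perfectness) implies that the associated graded of the $\bar I$-adic filtration on $\Ss/p[R]$ is concentrated in degree zero. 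Together these force $(\Ss/p[R])^{\wedge}_{\bar I}$ to be discrete, and the Cuntz-Deninger computation then identifies it with $R$. Once $A/p \simeq R$ is established, $p$-completeness of both sides upgrades the mod-$p$ equivalence to the desired equivalence $\Ss[R]^{\wedge}_I \simeq \Ss_{\W(R)}$ of $\Einfty$-rings.
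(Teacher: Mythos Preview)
Your reduction strategy contains a genuine error that breaks the argument. You write ``$\Ss_{\W(R)}/p \simeq R$ by the defining property of spherical Witt vectors,'' but this is false: the defining property gives $\Ss_{\W(R)} \otimes_{\Ss} \FF_p \simeq R$, which is a very different statement from $\Ss_{\W(R)} \otimes_{\Ss} \Ss/p \simeq R$. The spectrum $\Ss_{\W(R)}/p$ is not discrete; its higher homotopy involves $\pi_*(\Ss/p) \otimes_{\Z_p} \W(R)$. Consequently your claimed target $R$ is wrong, and the final paragraph (arguing that the associated graded of the $\bar I$-adic filtration is concentrated in degree zero, hence the completion is discrete) cannot be correct either, since the answer is not discrete. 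You cannot repair this by base-changing to $\FF_p$ instead of $\Ss/p$: while that would give $R$ on the target, $\FF_p$ is not a finite $\Ss$-module, so $-\otimes_{\Ss}\FF_p$ does not commute with the inverse limit defining $\Ss[R]^{\wedge}_I$, and you have no independent way to control that limit.

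The paper avoids this trap entirely. Rather than constructing a comparison map and reducing mod $p$, it computes $\Ss[R]^{\wedge}_I$ directly via its Amitsur complex $[n]\mapsto R^{\otimes_{\Ss[R]}(n+1)}$. The key input is that $\Ss[R]^{\wedge}_p \to \Ss_{\W(R)}$ is a $p$-complete homological epimorphism (since $\FF_p[R]\to R$ is a surjection of perfect $\FF_p$-algebras, hence $R\otimes_{\FF_p[R]}R \simeq R$). This lets one replace the tensor products over $\Ss[R]$ by tensor products over $\Ss_{\W(R)}$, after which the Amitsur complex is identified with the $\FF_p$-Adams tower of $\Ss_{\W(R)}$, whose limit is $\Ss_{\W(R)}$ by connectivity and $p$-completeness. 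The perfection of $R$ enters only through the homological epimorphism step, not through any analysis of an $I$-adic associated graded.
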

 
 Recall that for a map of $\Einfty$-ring spectra $A \to B$ the fibre $I$ is an ideal (in the sense of Smith) and so are 
 $I^n := I \otimes_A \cdots \otimes_A I$. Thus the cofibres $A/I^n$ are also $\Einfty$-rings and we write 
 \[
 A^\wedge_I = \underleftarrow{\lim} \, A / I^n \ .
 \]
Equivalently \cite[Section 2.1]{MNN}, 
we have that $A^\wedge_I$ is the limit of the Amitsur complex of $A \to B$
\[ A^{\wedge}_I = \lim_{\Delta}  \left( 
\xymatrix{ 
B\ar[r]<1.5pt>\ar[r]<-1.5pt> & B \otimes_{A} B \ar[r]<3pt>\ar[r]\ar[r]<-3pt> & B \otimes_{A} B \otimes_{A} B \ar[r]<1.5pt>\ar[r]<-1.5pt>\ar[r]<4.5pt> \ar[r]<-4.5pt> & \cdots 
} \right)
\]
 and  is therefore sometimes called the Bousfield--Kan completion. 
 
 Note that in such a situation there is also another notion of completion, namely the Bousfield localization at the ideal $I$. That is the terminal map of ring spectra $A \to A'$ that is a mod $I$ equivalence (i.e. equivalence after $-\otimes_A B$). In general the Bousfield--Kan completion $A \to A^\wedge_I$ is not a mod $I$ equivalence and hence doesn't agree with the Bousfield localization. But if $A \to A^\wedge_I$ is a mod $I$ equivalence then the two notions of completion agree. We will show that this is the case in our situation. 
 
Moreover, all of these completions make sense for arbitrary $A$-modules $M$ and we will show the following result.
%

\begin{thm}\label{completeness}
The $\infty$-category of $p$-complete modules over $\Ss_{\W(R)}$ embeds by restriction fully faithfully into the $\infty$-category of modules over $\Ss[R]$, i.e. the $\infty$-category of spectra with an  action of the multiplicative monoid $R$.  

For $M$ a bounded below $\Ss[R]$-module we have
\[
 \Ss_{W(R)} \widehat{\otimes}_{\Ss[R]}  M = M^\wedge_I 
\]
where $\widehat{\otimes}$ is the $p$-completed tensor product and  $M^\wedge_I = \underleftarrow{\lim} \, M / I^nM $ with $I^nM := I^n \otimes_{\Ss[R]} M$. Moreover if $M$ is bounded below and $p$-complete then the following are equivalent:
\begin{enumerate}
\item \label{eins}
$M$ lies in the essential image of the embedding of $p$-complete $\Ss_{\W(R)}$-modules;
\item\label{zwei} $M$ is $I$-complete in the sense that 
$M \xrightarrow{\simeq}  M^\wedge_I$ is an equivalence;
\item\label{drei}
$M$ is Bousfield local with respect to the mod $I$ equivalences;
\item \label{vier} The induced multiplicative $R$-action on the $\mathbb{F}_p$-homology groups $H_*(M, \mathbb{F}_p)$ is additive;
\item \label{fuenf}
For every $r,s \in R$ the map 
\[
 \rho_r + \rho_s - \rho_{r + s}: M \to  M
\]
is as a map of spectra divisible by $p$;

\item \label{sechs}
For every $r,s \in R$ the homomorphism 
\[
 \rho_r + \rho_s - \rho_{r + s}: \pi_* M \to \pi_* M
\]
is as a homomorphism of graded abelian groups divisible by $p$;
\item \label{sieben}
The induced multiplicative $R$-action on the abelian groups $\pi_n(M)/p$ and $\pi_n(M)[p]$ is  additive\footnote{The quotient $\pi_n(M)/p$ means the actual, non-derived quotient in abelian groups.}.
\item \label{acht}
The homotopy groups $\pi_n(M)$ of $M$ all lie separately in the essential image of the embedding of $p$-complete $\Ss_{\W(R)}$-modules \footnote{Note that the homotopy groups of $M$ are considered as $\Ss[R]$-modules here. But we can equivalently consider them as  (ordinary) modules over $\Z[R]$ and then the condition is that it is derived $p$-complete and the action of $\Z[R]$ extends to a $W(R)$-action. This in turn then is equivalent to the module being $I$-complete (by condition \ref{zwei}) which again can be expressed in classical terms.};

\end{enumerate}
If $M$ is not bounded below, then \eqref{eins}, \eqref{fuenf}, \eqref{sechs}, \eqref{sieben}, \eqref{acht} are still equivalent and $\eqref{zwei} \Rightarrow \eqref{drei} \Rightarrow \eqref{eins} \Rightarrow \eqref{vier}$.
%
\end{thm}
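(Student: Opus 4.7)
The plan is to derive the first two assertions from \thmref{thm:spherical Witt} together with the general theory of modules over Bousfield--Kan completions, and then to organize the equivalence of \eqref{eins}--\eqref{acht} around the two central conditions \eqref{zwei} and \eqref{fuenf}. The embedding itself is formal: since $\Ss_{\W(R)} \simeq \Ss[R]^\wedge_I$ by \thmref{thm:spherical Witt}, standard results on module categories over a ring completion identify $\Mod_{\Ss_{\W(R)}}$ fully faithfully with the full subcategory of $I$-complete $\Ss[R]$-modules; because $R$ is an $\FF_p$-algebra, $p$ lies in $\pi_0 I = \ker(\Z[R] \to R)$, so every $I$-complete module is automatically $p$-complete, and $p$-complete $\Ss_{\W(R)}$-modules embed with essential image the $I$-complete modules. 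For bounded-below $M$, I would deduce the tensor formula $\Ss_{\W(R)} \widehat\otimes_{\Ss[R]} M \simeq M^\wedge_I$ by comparing $(\lim \Ss[R]/I^n) \otimes_{\Ss[R]} M$ with $\lim(M/I^n M) = M^\wedge_I$; the bounded-below assumption controls the relevant $\lim^1$ obstructions, and $p$-completion of the source is automatic since $p \in I$.

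For the equivalences when $M$ is bounded below and $p$-complete I would argue cyclically. Equivalence \eqref{zwei}$\Leftrightarrow$\eqref{drei} is the general comparison between the Bousfield--Kan completion and Bousfield localization, applicable here once one verifies (using \thmref{thm:spherical Witt}) that $\Ss[R]\to\Ss_{\W(R)}$ is a mod $I$ equivalence, as promised in the excerpt. Equivalence \eqref{eins}$\Leftrightarrow$\eqref{zwei} follows from the essential-image description combined with the tensor formula. For \eqref{eins}$\Rightarrow$\eqref{vier}--\eqref{acht}: the multiplicative lift $R\to\pi_0\Ss_{\W(R)}=\W(R)$ satisfies $[r]+[s]-[r+s]\in V\W(R)=p\W(R)$ (the last equality because $R$ is perfect, so the Verschiebung factors as $V=pF^{-1}$), which directly gives the mod $p$ additivity assertions. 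The mutual equivalence of \eqref{vier}--\eqref{acht} is then a reorganization using the short exact sequences $0\to\pi_n(M)/p\to\pi_n(M/p)\to\pi_{n-1}(M)[p]\to 0$ and the detection of $p$-divisibility of maps on bounded-below spectra via $\FF_p$-homology.

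The main obstacle is the closing implication \eqref{fuenf}$\Rightarrow$\eqref{zwei}. The idea is that under \eqref{fuenf}, on $M/p$ each $\rho_r+\rho_s-\rho_{r+s}$ acts as the zero map of spectra, and setting $s=0$ forces $\rho_0$ to act as zero as well; thus the $\Ss[R]$-action on $M/p$ factors through a quotient of $\Ss[R]$ that one identifies with $R$ as a ring spectrum, endowing $M/p$ with an $R$-module structure. Since $R=\Ss[R]/I$, any $R$-module is annihilated by $I$ and therefore trivially $I$-complete. By induction on $n$ via the fibre sequences $M/p\to M/p^{n+1}\to M/p^n$ and closure of $I$-complete modules under limits, each $M/p^n$ is $I$-complete, and the $p$-completeness of $M$ then forces $M\simeq\lim M/p^n$ to be $I$-complete as well. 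For $M$ not bounded below, the same cyclic argument still yields mutual equivalence of \eqref{eins}, \eqref{fuenf}--\eqref{acht} (it never used the bounded-below hypothesis), while the tensor formula and the direct link with \eqref{zwei} can fail, leaving only the one-way chain $\eqref{zwei}\Rightarrow\eqref{drei}\Rightarrow\eqref{eins}\Rightarrow\eqref{vier}$ coming from the formal considerations above.
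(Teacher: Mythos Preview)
Your overall architecture is close to the paper's, but there are two genuine gaps.

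The most serious is the step you flag as ``the main obstacle,'' $\eqref{fuenf}\Rightarrow\eqref{zwei}$. Divisibility of $\rho_r+\rho_s-\rho_{r+s}$ by $p$ as a self-map of $M$ does \emph{not} force the induced map on $M/p$ to vanish: the self-map $p\colon \Ss/p\to\Ss/p$ is nonzero when $p=2$ (one has $[\Ss/2,\Ss/2]\cong\Z/4$), so $p\cdot\bar g$ need not be null on $M/p$. Even at odd primes, where $p$ does act by zero on $\Ss/p$, knowing that certain elements of $\pi_0\Ss[R]$ act by zero on $M/p$ does not produce a factorization of the ring map $\Ss[R]\to\End(M/p)$ through $R$: such a factorization is coherent data, not a $\pi_0$ condition. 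The paper avoids this entirely by descending to homotopy groups. From $\eqref{fuenf}$ one gets $\eqref{sechs}$ and then $\eqref{sieben}$, so the \emph{discrete} abelian groups $\pi_n(M)/p$ and $\pi_n(M)[p]$ are honest $R$-modules; those lie in the essential image, and one then climbs back to $\pi_nM$ (via $\pi_nM/\!/p^k$ and $p$-completeness) and to $M$ (via the Postnikov tower), using only that the essential image is closed under limits, extensions and shifts. This yields $\eqref{sieben}\Rightarrow\eqref{acht}\Rightarrow\eqref{eins}$ without ever asserting an $R$-module structure on a mod-$p$ spectrum.

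Second, the opening claim that ``standard results on module categories over a ring completion'' give the fully faithful embedding is not formal: for a general Bousfield--Kan completion $A\to A^\wedge_I$ restriction from $A^\wedge_I$-modules need not be fully faithful, and the essential image need not be the $I$-complete modules even when it is. The substantive input here is that $\Ss[R]^\wedge_p\to\Ss_{\W(R)}$ is a $p$-complete \emph{homological epimorphism}, which the paper proves by reducing mod $p$ to the surjection $\FF_p[R]\to R$ of perfect $\FF_p$-algebras. This is what makes restriction fully faithful, and it also drives the tensor formula: rather than trying to commute a limit past $-\otimes_{\Ss[R]}M$ and control the obstructions, the paper rewrites the Amitsur complex for $\Ss[R]\to R$ over $M$ as the Amitsur complex for $\Ss_{\W(R)}\to R$ (using the homological epi to replace the tensor products), identifies the latter with the $\FF_p$-Adams tower of $\Ss_{\W(R)}\widehat\otimes_{\Ss[R]}M$, and then invokes Adams convergence for bounded-below $p$-complete spectra.
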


As a consequence of this description of the category of modules, we also get a universal property of spherical Witt vectors:

\begin{cor}\label{cor_eins}
Let $A$ be a $p$-complete $\mathbb{E}_1$-ring spectrum.  
Then we have a pullback square 
\begin{equation*}
\xymatrix{
\Map_{\mathrm{Alg}_{\mathbb{E}_1}}( \Ss_{\W(R)} , A) \ar[r] \ar[d] & \Map_{\mathrm{Ring}}( R , {\pi_0 A} / {p} ) \ar[d]\\
\Map_{\mathrm{Mon}_{\mathbb{E}_1}}\Big( (R,\cdot) , (\Omega^\infty A, \cdot)\Big) \ar[r] & \Map_{\mathrm{Mon}}\Big( (R,\cdot) ,  ({\pi_0A} / {p},\cdot ) \Big)
}
\end{equation*}
Here $\pi_0A / p$ is the ordinary quotient of $\pi_0A$ by the ideal $(p)$, which is automatically a two sided ideal, hence $\pi_0A / p$ is an ordinary associative ring.
\end{cor}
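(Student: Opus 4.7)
The plan is to combine the universal property of the spherical monoid ring with \thmref{completeness}. The universal property of $\Ss[R]$ as the free $\mathbb{E}_1$-ring on the multiplicative $\mathbb{E}_1$-monoid $(R,\cdot)$ identifies
\[
\Map_{\mathrm{Alg}_{\mathbb{E}_1}}(\Ss[R], A) \simeq \Map_{\mathrm{Mon}_{\mathbb{E}_1}}\bigl((R,\cdot),(\Omega^\infty A,\cdot)\bigr).
\]
Moreover, the fully faithful embedding of $p$-complete $\Ss_{\W(R)}$-modules into $\Ss[R]$-modules provided by \thmref{completeness} is a symmetric monoidal localization, with the $p$-completed tensor product serving as the localized monoidal structure. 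Such symmetric monoidal localizations of presentable $\infty$-categories descend to fully faithful embeddings on $\mathbb{E}_1$-algebras, so $\Ss[R] \to \Ss_{\W(R)}$ is the unit of a localization of $\mathbb{E}_1$-algebras. Consequently, the restriction map
\[
\Map_{\mathrm{Alg}_{\mathbb{E}_1}}(\Ss_{\W(R)}, A) \hookrightarrow \Map_{\mathrm{Alg}_{\mathbb{E}_1}}(\Ss[R], A)
\]
is a subspace inclusion whose image consists of those $\phi: \Ss[R] \to A$ under which $A$, viewed as an $\Ss[R]$-module, lies in the essential image of the embedding.

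Next I would interpret this condition concretely using condition \eqref{fuenf} of \thmref{completeness}, which remains equivalent to membership in the essential image without any bounded-below hypothesis. Writing $f: R \to \pi_0 A$ for the monoid map on $\pi_0$ underlying $\phi$, the map $\rho_r + \rho_s - \rho_{r+s}: A \to A$ is multiplication by $d_{r,s} := f(r) + f(s) - f(r+s) \in \pi_0 A$, and this is divisible by $p$ as a map of spectra iff $d_{r,s} \in p\pi_0 A$: if $d_{r,s} = py$ then multiplication by $y$ gives the factorization, and conversely applying $\pi_0$ to any factorization and evaluating at $1 \in \pi_0 A$ yields $d_{r,s} \in p\pi_0 A$. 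Hence the condition on $\phi$ is precisely that the composite $R \xrightarrow{f} \pi_0 A \twoheadrightarrow \pi_0 A / p$ is a ring homomorphism.

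Finally, since $R$ and $\pi_0 A/p$ are ordinary rings, $\Map_{\mathrm{Ring}}(R, \pi_0 A/p)$ and $\Map_{\mathrm{Mon}}((R,\cdot),(\pi_0 A/p,\cdot))$ are discrete sets, and the right vertical map of the claimed square is the inclusion of a subset. The pullback therefore carves out of $\Map_{\mathrm{Mon}_{\mathbb{E}_1}}\bigl((R,\cdot),(\Omega^\infty A,\cdot)\bigr)$ precisely those monoid maps whose induced composite $R \to \pi_0 A/p$ is additive, matching $\Map_{\mathrm{Alg}_{\mathbb{E}_1}}(\Ss_{\W(R)}, A)$ by the preceding paragraph. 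The principal obstacle is the first step, namely promoting the module-level localization of \thmref{completeness} to a localization of $\mathbb{E}_1$-algebras, so that factorizations through $\Ss_{\W(R)}$ exist uniquely; the standard mechanism is that symmetric monoidal Bousfield localizations of presentable $\infty$-categories automatically induce fully faithful embeddings on the corresponding categories of associative algebras.
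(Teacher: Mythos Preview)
Your overall strategy matches the paper's: identify $\mathbb{E}_1$-maps out of $\Ss[R]$ with multiplicative $\mathbb{E}_1$-monoid maps, then use \thmref{completeness} to cut out the maps that factor through $\Ss_{\W(R)}$. The computation in your second and third paragraphs is fine, and your choice of criterion \eqref{fuenf} in place of the paper's \eqref{sieben} makes no difference.

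The gap is exactly at the step you flag as the principal obstacle, and the ``standard mechanism'' you cite does not close it. A symmetric monoidal localization of $\Mod(\Ss[R])$ descends to a localization of $\mathrm{Alg}_{\mathbb{E}_1}\bigl(\Mod(\Ss[R])\bigr)$, but objects of this category are $\mathbb{E}_1$-rings $A$ together with an $\mathbb{E}_\infty$-\emph{central} map $\Ss[R]\to A$, whereas what you need is the slice category $\bigl(\mathrm{Alg}_{\mathbb{E}_1}(\mathrm{Sp})\bigr)_{\Ss[R]/}$ of arbitrary $\mathbb{E}_1$-ring maps. These are different in general, so one cannot conclude from the symmetric monoidal localization alone that $\Map_{\mathrm{Alg}_{\mathbb{E}_1}}(\Ss_{\W(R)},A)\hookrightarrow\Map_{\mathrm{Alg}_{\mathbb{E}_1}}(\Ss[R],A)$ is a full subspace, nor that membership in the image is detected by $A$ as a one-sided $\Ss[R]$-module.

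The paper's fix is to replace one-sided modules by bimodules: by \cite[Corollary~3.4.1.7]{HA} one has $\bigl(\mathrm{Alg}_{\mathbb{E}_1}\bigr)_{B/}\simeq\mathrm{Alg}_{\mathbb{E}_1}\bigl({}_{B}\Mod_{B}\bigr)$, and the $p$-complete homological epimorphism $\Ss[R]\to\Ss_{\W(R)}$ makes the restriction ${}_{\Ss_{\W(R)}}\Mod_{\Ss_{\W(R)}}^{\wedge}_p\hookrightarrow{}_{\Ss[R]}\Mod_{\Ss[R]}$ monoidally fully faithful (apply \thmref{completeness} once for the left action and once for the right action, and use \lemref{prop_epi}(4) for compatibility with the relative tensor product). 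This yields the desired full subspace inclusion on $\mathbb{E}_1$-ring maps, with image characterised by the bimodule condition: both the left and the right $\Ss[R]$-actions on $A$ must extend. In your situation both actions are given by one-sided multiplication by elements of the image of $\pi_0\Ss[R]\to\pi_0A$, so the two conditions collapse to the single requirement $f(r)+f(s)-f(r+s)\in p\,\pi_0A$ that you wrote down; but the argument needs the bimodule step to get there.
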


In other words: $\Map_{\mathrm{Alg}_{\mathbb{E}_1}}( \Ss_{\W(R)} , A)$ is the full subspace of $\Map_{\mathrm{Mon}_{\mathbb{E}_1}}\Big( (R,\cdot) , (\Omega^\infty A, \cdot)\Big)$ consisting of those multiplicative maps for which the composition $R \to \pi_0A / p$ becomes a ring map. 
Another reformulation of Corollary \ref{cor_eins}  is that the spherical Witt vectors $\Ss_{\W(R)}$ are the universal $p$-complete $\mathbb{E}_1$-ring with a multiplicative map from $R$ which becomes additive on $\pi_0(\Ss_{\W(R)}) / p$. 

\begin{remark}\label{remark_en}
With  the same proof as Corollary \ref{cor_eins} one can deduce a similar statement for a $p$-complete $\mathbb{E}_n$-ring spectrum $A$ and $\mathbb{E}_n$-maps out of spherical Witt vectors for $n \geq 1$, namely that we have a pullback:
\begin{equation}\label{square}
\xymatrix{
\Map_{\mathrm{Alg}_{\mathbb{E}_n}}( \Ss_{\W(R)} , A) \ar[r] \ar[d] & \Map_{\mathrm{Ring}}( R , {\pi_0 A} / {p} ) \ar[d]\\
\Map_{\mathrm{Mon}_{\mathbb{E}_n}}\Big( (R,\cdot) , (\Omega^\infty A, \cdot)\Big) \ar[r] & \Map_{\mathrm{Mon}}\Big( (R,\cdot) ,  ({\pi_0A} / {p},\cdot ) \Big) \ .
}
\end{equation}
The reason we did not state this here is that when $n>1$ one can prove that both horizontal maps are in fact equivalences (see Proposition \ref{cor_2} below) . For the case $n = \infty$ this reproves the universal property of spherical Witt vectors given in  \cite{LurieEllipticII, ChromaticNull} and used in \cite{Antieau} to construct the spherical Witt vectors.

For the case $n = \infty$ it is easy to see that the lower horizontal map in \eqref{square} is an equivalence: by perfectness of $R$ we may replace $(\Omega^\infty A, \cdot)$ by the inverse limit of the $p$-th power map on $\Omega^\infty A$ which respects the multiplicative $\mathbb{E}_\infty$-structure. Using that the $p$-th power map vanishes on higher homotopy groups, we can see that this inverse limit is equivalent to the inverse limit perfection of $({\pi_0A} / {p},\cdot )$, i.e. the tilt of $\pi_0(A)$. Then it follows by the fact that \eqref{square} is a pullback that the upper horizontal map is also an equivalence. This then gives a new proof of the aforementioned universal property of $\Ss_{\W(R)}$ as an $\mathbb{E}_\infty$-ring. 
\end{remark}

For the next statement we use the same obstruction theory that was initiated by Lurie to prove the $\mathbb{E}_\infty$-universal property of spherical Witt vectors. 
While the proof of this statement is logically independent of the results of this paper we include it for completeness. But note that in the case $n = \infty$ we did sketch a direct proof using the results of this paper in the previous Remark \ref{remark_en}.  We thank Maxime Ramzi for a discussion of the next Statement and allowing us to include his proof in this paper and especially his example that shows that the condition that $\pi_0(A)$ lies in the center of $\pi_*(A)$ is necessary, see Remark  \ref{example_maxime}. 


\begin{prop}\label{cor_2}
Let $A$ be a $p$-complete $\mathbb{E}_n$-ring spectrum for $1 \leq n \leq \infty$. For $n = 1$ assume additionally that $\pi_0(A)$ lies in the center of $\pi_*(A)$. Then we have a natural equivalence
\[
\Map_{\mathrm{Alg}_{\mathbb{E}_n}}( \Ss_{\W(R)} , A) \simeq  \Map_{\mathrm{CRing}}( R , {\pi_0 A}^\flat)
\]
where the tilt $ {\pi_0 A}^\flat$ is the inverse limit perfection of $\pi_0(A)/p$. 
\end{prop}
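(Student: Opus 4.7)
The plan is to combine the pullback square of Remark~\ref{remark_en} with a direct calculation of the inverse-limit perfection of the multiplicative monoid $(\Omega^\infty A, \cdot)$. Since the right-hand vertices of that square are $\Map_{\mathrm{Ring}}(R, \pi_0(A)/p)$ and $\Map_{\mathrm{Mon}}((R,\cdot), \pi_0(A)/p)$, and perfectness of $R$ identifies both with the respective mapping spaces into the tilt $\pi_0(A)^\flat$ (any ring or monoid map from a perfect $\mathbb{F}_p$-algebra factors uniquely through the inverse-limit perfection), the statement reduces to showing that the upper horizontal map in the square is an equivalence, or equivalently---by the pullback property---that the lower horizontal map
\[
\Phi\colon \Map_{\mathrm{Mon}_{\mathbb{E}_n}}\bigl((R,\cdot), (\Omega^\infty A, \cdot)\bigr) \to \Map_{\mathrm{Mon}}\bigl((R,\cdot), (\pi_0A/p, \cdot)\bigr)
\]
is an equivalence.

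For $n \geq 2$, I would follow the pattern sketched for $n = \infty$ in Remark~\ref{remark_en}. Since $R$ is perfect, $(R, \cdot) = \lim_{(-)^p}(R, \cdot)$ as $\mathbb{E}_\infty$-monoids, and for $n \geq 2$ the $p$-th power on any $\mathbb{E}_n$-monoid is an $\mathbb{E}_n$-monoid endomorphism (via the standard Eckmann--Hilton use of an $\mathbb{E}_2$ swap). Hence mapping out of $(R, \cdot)$ into a target $\mathbb{E}_n$-monoid $M$ is equivalent to mapping into $\lim_{(-)^p} M$, and it suffices to identify $\lim_{(-)^p}(\Omega^\infty A, \cdot)$ with $\pi_0(A)^\flat$. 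On $\pi_0$ the $p$-th power is the ring-theoretic Frobenius on $\pi_0(A)$, and the inverse limit agrees with $\pi_0(A)^\flat = \lim_{(-)^p} \pi_0(A)/p$ by the classical argument using $p$-completeness of $\pi_0(A)$; on $\pi_k(\Omega^\infty A, 1)$ for $k \geq 1$, the multiplicative $p$-th power at the unit equals multiplication by $p$, and since $A$ is $p$-complete these groups are derived $p$-complete, so the limit along multiplication by $p$ vanishes. Consequently $\Phi$ becomes an equivalence after this reduction.

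The main obstacle is the case $n = 1$, where $\Omega^\infty A$ is only $\mathbb{E}_1$-monoidal, so $(-)^p$ is generally not a monoid endomorphism and the Frobenius-limit argument breaks down directly. The hypothesis $\pi_0(A) \subseteq Z(\pi_*(A))$ is precisely what is needed to bypass this: it ensures that elements lying over the central $\pi_0(A)$ admit compatible $p$-th powers, and Ramzi's example in Remark~\ref{example_maxime} shows the hypothesis cannot be dropped. Granted centrality, one option is to restrict the Frobenius-limit argument to the submonoid of $\Omega^\infty A$ lying over the image of $R \to \pi_0(A)$, where centrality makes the above reduction go through. A more robust alternative, and the one implicit in the paper's reference to Lurie's obstruction theory, is to build the $\mathbb{E}_1$-algebra map $\Ss_{\W(R)} \to A$ inductively along the Postnikov tower of $A$, starting from the classical ring map $W(R) \to \pi_0(A)$ provided by the universal property of Witt vectors and a given ring map $R \to \pi_0(A)^\flat$; the successive obstructions live in topological Andr\'e--Quillen cohomology of $\Ss_{\W(R)}$ with coefficients in shifted $\pi_k(A)$, and they vanish thanks to formal \'etaleness of $\Ss_{\W(R)}$ over the $p$-complete sphere together with centrality (which reduces the relevant bimodule coefficients to symmetric ones over $\pi_0(A)$). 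Naturality in $A$ is immediate from naturality of all the constructions involved.
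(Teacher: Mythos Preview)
Your treatment of the case $n=1$ via option (b) is indeed the paper's route: one climbs the Postnikov tower of $A$ (and then the $p$-adic tower on $\pi_0$) using that each stage is a square-zero extension by a \emph{symmetric} bimodule---this is exactly where your centrality hypothesis enters. The paper isolates this in a lemma: the relevant obstruction module is the symmetrization of the $\mathbb{E}_1$-cotangent complex, namely $\fib\bigl(\Ss_{\W(R)} \to \mathrm{THH}(\Ss_{\W(R)})\bigr)$, and this vanishes $p$-adically because $\mathrm{HH}_*(R;\FF_p)\simeq R$ for perfect $R$. You gesture at this as ``formal \'etaleness'' but do not say what the actual vanishing input is, nor do you record the intermediate factorisation $A\to\pi_0(A)\to\pi_0(A)/\!/p\to\pi_0(A)/p$ needed to reduce all the way to the target. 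Your option (a) for $n=1$ is too vague to evaluate; centrality of $\pi_0(A)$ in $\pi_*(A)$ does not by itself make the $p$-th power an $\mathbb{E}_1$-self-map on any evident submonoid of $\Omega^\infty A$.

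For finite $n\geq 2$ there is a genuine gap. Your claim that ``for $n\geq 2$ the $p$-th power on any $\mathbb{E}_n$-monoid is an $\mathbb{E}_n$-monoid endomorphism via the standard Eckmann--Hilton swap'' is not correct. Dunn additivity lets you view an $\mathbb{E}_n$-monoid $M$ as an $\mathbb{E}_1$-object in $\mathbb{E}_{n-1}$-monoids; the iterated multiplication $M^p\to M$ and the diagonal $M\to M^p$ are then $\mathbb{E}_{n-1}$-maps, so their composite $x\mapsto x^p$ is $\mathbb{E}_{n-1}$, but there is no reason for it to be $\mathbb{E}_n$. Concretely, for $n=2$ this is the familiar fact that in a braided monoidal category the functor $X\mapsto X^{\otimes p}$ (with structure isomorphism coming from the braiding) is monoidal but in general not \emph{braided} monoidal. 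Thus your identification $\Map_{\mathrm{Mon}_{\mathbb{E}_n}}\bigl(R,\Omega^\infty A\bigr)\simeq\Map_{\mathrm{Mon}_{\mathbb{E}_n}}\bigl(R,\lim_{(-)^p}\Omega^\infty A\bigr)$ is unjustified for $2\leq n<\infty$, and with it the equivalence of the lower horizontal map in the square.

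The paper avoids this issue entirely: for $n>1$ it does not touch the Frobenius on the target, but instead uses Dunn additivity on the \emph{source}, writing $\Map_{\mathrm{Alg}_{\mathbb{E}_n}}(\Ss_{\W(R)},A)$ as the totalization of $[k]\mapsto\Map_{\mathrm{Alg}_{\mathbb{E}_{n-1}}}\bigl((\Ss_{\W(R)})^{\hat{\otimes}(k+1)},A\bigr)$, together with the identification $(\Ss_{\W(R)})^{\hat{\otimes}(k+1)}\simeq\Ss_{\W(R^{\otimes_{\FF_p}(k+1)})}$, and then applies the inductive hypothesis. This reduces everything to the $n=1$ case already handled by the obstruction-theoretic argument you sketched.
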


In particular this implies that every $\mathbb{E}_1$-map uniquely extends to an $\mathbb{E}_n$-map. It also follows that for $n > 2$ or under the additional assumption for $n=1$ both horizontal maps in the square \eqref{square} of Remark \ref{remark_en} are equivalences. For the upper line this is the statement of Proposition \ref{cor_2} and for the lower one it is the Statement of Proposition \ref{cor_2} for $R$ replaced by $\mathbb{F}_p[R]$ using that $\mathbb{S}[R]^\wedge_p = \Ss_{\W(\mathbb{F}_p[R])}$.  \\
%
%

Finally we note that all of our constructions and statements have, with exactly the same proofs, analogues for ordinary Witt vectors: for every perfect $\FF_p$-algebra $R$ we have an equivalence
\[
W(R) \simeq \underleftarrow{\lim} (\Z[R] / I^n)
\]
which is a derived version of Cuntz--Deninger's statement. We also have that the full subcategory of $p$-complete objects in the derived category of $W(R)$ is equivalent to a full subcategory of $\Z[R]$-modules characterized by the analogous conditions to $\eqref{zwei} - \eqref{sechs}$ of Theorem \ref{completeness}, with the difference that we do not have to assume bounded below anywhere since $\mathbb{F}_p$ is dualisable over $\Z$ (but not over $\Ss$ where it is only pseudocoherent). We then have the exact analogues of  Corollary \ref{cor_eins} and Proposition \ref{cor_2} for $W(R)$. 

\ssec*{Acknowledgments}
We would like to thank Ben Antieau, Joachim Cuntz, Christopher Deninger, Felix Gu, Achim Krause, Jonas McCandless, Maxime Ramzi, and Tomer Schlank for useful discussions. 

We also thank Department of Mathematics at M\"unster University for hospitality during the visit of the second author, when the work on this project was conducted.
The first author was funded by the Deutsche Forschungsgemeinschaft (DFG, German Research Foundation) -- Project-ID 427320536--SFB 1442, as well as under Germany's Excellence Strategy EXC 2044 390685587, Mathematics M\"unster: Dynamics-Geometry-Structure. 
The second author is grateful to CNRS and Institut de Mathématiques de Jussieu --- Paris Rive Gauche for perfect working conditions.
%

\section{Proofs of the Results}

First observe that we have  a map $t \colon \FF_p[R] \to R$ from the $\FF_p$-monoid algebra on the  multiplicative monoid of $R$ to $R$. We note that $\FF_p[R]$ is also a perfect $\FF_p$-algebra and therefore we get an induced map 
\[
T \colon \Ss_{\W(\FF_p[R])} \to \Ss_{\W(R)} 
\]
using functoriality of the spherical Witt vectors.
The $\Einfty$-ring of spherical Witt vectors  $\Ss_{\W(\FF_p[R])}$ is the unique $p$-complete lift of $\FF_p[R]$ to the sphere. Since $\Ss [R]^{\wedge}_p$ is a $p$-complete $\Einfty$-ring whose $\FF_p$-homology is $\FF_p[R]$
 we deduce that  $\Ss_{\W(\FF_p[R])} \simeq \Ss [R]^{\wedge}_p$.
Thus we can interpret $T$ as a map 
\[
T \colon \Ss [R]^{\wedge}_p \to \Ss_{\W(R)}.
\]
We have a factorization 
$
 \Ss [R]^{\wedge}_p \to \Ss_{\W(R)} \to R
$
of the canonical map $\Ss [R]^{\wedge}_p \to R$ through $T$ which follows by base-changing the first two rings to $\FF_p$ (since $R$ is an $\FF_p$-algebra). \footnote{Note that on $\pi_0$ the map $T$ induces a map $\Z[R]^\wedge_p \to \W(R)$ which corresponds to the usual multiplicative character $R \to W(R)$. This follows again by the same obstruction theory: $\Z[R]^\wedge_p \simeq \W(\FF_p[R])$ and there is only one lift of $\FF_p[R] \to R$ to a map $ \W(\FF_p[R]) \to \W(R)$. }

Recall that a map of ring spectra $A \to B$ is called \textit{$p$-complete homological epimorphism} if the map $B \widehat{\otimes}_A B \to B$ is an equivalence, where $\widehat{\otimes}$ means the $p$-completed tensor product. 
We have the following well-known properties:
\begin{lem}\label{prop_epi}
\begin{enumerate}
\item A map of connective ring spectra $A \to B$ is a $p$-complete homological epimorphism precisely if $A \otimes_\Ss \FF_p \to B \otimes_\Ss \FF_p$ is a homological epimorphism.
\item Any surjective map of ordinary perfect $\FF_p$-algebras is a homological epimorphism.
\item For a $p$-complete homological epimorphism $A \to B$ the restriction functor from $p$-complete $B$-modules to $A$ modules is fully faithful. 
\item If $A \to B$ is a $p$-complete homological epimorphism and $M, N$ are $B$-modules, then the canonical map
\[
M \widehat{\otimes}_A N \to M \widehat{\otimes}_B N
\]
is an equivalence.
\end{enumerate}
\end{lem}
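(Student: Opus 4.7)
The plan is to treat the four claims in order, with the later parts following formally from the earlier ones. For part (1), the key input is the Hurewicz-type observation that a map of connective spectra $X \to Y$ becomes an equivalence after $p$-completion if and only if $X \otimes_\Ss \FF_p \to Y \otimes_\Ss \FF_p$ is an equivalence. Applying this to the multiplication map $B \otimes_A B \to B$ and using the base change identity
\[
(B \otimes_A B) \otimes_\Ss \FF_p \;\simeq\; (B \otimes_\Ss \FF_p) \otimes_{A \otimes_\Ss \FF_p} (B \otimes_\Ss \FF_p),
\]
the $p$-complete homological epimorphism condition translates exactly to the mod-$p$ analogue being a plain homological epimorphism of $\FF_p$-algebras.

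For part (2), let $R \twoheadrightarrow S$ be a surjection of perfect $\FF_p$-algebras with kernel $J$. I would first observe that $J$ is idempotent: any $x \in J$ can be written as $y^p$ with $y \in R$ since Frobenius on $R$ is surjective, and since $S$ is perfect, hence reduced, the image of $y$ in $S$ vanishes, so $y \in J$ and $x = y \cdot y^{p-1} \in J^2$. This gives $J = J^2$ and in particular $\mathrm{Tor}_1^R(S, S) = J/J^2 = 0$. For the higher $\mathrm{Tor}$-vanishing I would invoke the classical fact that the cotangent complex $L_{R/\FF_p}$ vanishes for every perfect $\FF_p$-algebra; combined with the transitivity triangle this yields $L_{S/R} = 0$, and a surjection with vanishing relative cotangent complex has $S \otimes_R^L S \simeq S$.

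Parts (3) and (4) reduce to formal manipulations with $(B \otimes_A B)^\wedge_p \simeq B$. For (4) I would start from the identity
\[
M \otimes_A N \;\simeq\; M \otimes_B (B \otimes_A B) \otimes_B N,
\]
valid for any $B$-modules $M$ and $N$ viewed as $A$-modules via restriction, and then $p$-complete both sides, using that $p$-completion is compatible with tensoring against a $p$-complete factor, to obtain $M \widehat{\otimes}_A N \simeq M \widehat{\otimes}_B N$. For (3), given $p$-complete $B$-modules $M$ and $N$, the extension-restriction adjunction together with the $p$-completeness of $N$ gives $\Map_A(M, N) \simeq \Map_B(M \widehat{\otimes}_A B, N)$; applying (4) with one module equal to $B$ shows $M \widehat{\otimes}_A B \simeq M \widehat{\otimes}_B B \simeq M$, so $\Map_A(M, N) \simeq \Map_B(M, N)$.

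I expect the main obstacle to be the higher $\mathrm{Tor}$-vanishing in (2): the idempotency $J = J^2$ handles only $\mathrm{Tor}_1$, and eliminating $\mathrm{Tor}_i^R(S,S)$ for every $i \ge 1$ genuinely requires either the absolute cotangent-complex vanishing for perfect $\FF_p$-algebras or a hands-on Frobenius induction, both of which are standard but nontrivial inputs that should be cited cleanly.
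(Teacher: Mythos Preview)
Your proposal is correct. For (1) it coincides with the paper's argument. For (2)--(4) it takes a different but valid route, so a brief comparison is in order.

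For (2), the paper gives a one-line proof: the derived pushout $B \otimes_A^L B$ is an animated perfect $\FF_p$-algebra, and such objects are automatically discrete; since $\pi_0(B \otimes_A^L B) = B$ by surjectivity, the claim follows. Your argument instead unwinds this via the vanishing of the absolute cotangent complex of perfect $\FF_p$-algebras, deduces $L_{S/R} = 0$ from transitivity, and then uses the Hurewicz theorem for the cotangent complex to conclude $S \otimes_R^L S \simeq S$. Both arguments ultimately rest on the same input (vanishing of the cotangent complex for perfect rings), but the paper's packaging via ``animated perfect $\Rightarrow$ discrete'' is shorter; your version makes the mechanism more explicit, and the $J = J^2$ observation, while pleasant, is not actually needed once you invoke the cotangent-complex Hurewicz argument.

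For (3) and (4), the paper argues by reducing to generators: both sides of the comparison commute with colimits in $M$ (and $N$), so it suffices to treat $M = B$ (resp.\ $M = N = B$), where the claim is the defining property of a $p$-complete homological epimorphism. You instead use the associativity identity $M \otimes_A N \simeq M \otimes_B (B \otimes_A B) \otimes_B N$ directly and then $p$-complete. Your approach proves (4) first and derives (3) from it, whereas the paper proves them independently; both orderings work, and the content is the same.
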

\begin{proof}
For (1) we simply note that $
(B \otimes_\Ss \FF_p) \otimes_{A \otimes_\Ss \FF_p} (B \otimes_\Ss \FF_p) \simeq (B \widehat{\otimes}_A B) \otimes_\Ss \FF_p
$
and that we can check equivalences between $p$-complete, connective spectra after base-change to $\FF_p$. 

For (2) note that $B \otimes^{L}_A B$ is an animated perfect $\FF_p$-algebra and therefore discrete, but from surjectivity of $A \to B$ it easily follows that $\pi_0$ is isomorphic to $B$.

For (3) we simply note that the restriction functor has a left adjoint $B \widehat{\otimes}_A -: \Mod(A)^\wedge_p \to  \Mod(B)^\wedge_p$ and thus is fully faithful precisely if for any $B$-module $M$ the counit $ B\widehat{\otimes}_A M \to M$ is an equivalence. Since both sides commute with colimits in $M$ it suffices to check this for $M = B$. 

For (4) we note that both sides commute with colimits in $M$ and $N$ and thus we can reduce to $M = N = B$.
\end{proof}

\begin{lem}\label{epi}
\label{lm: hom epi}
The map $T \colon \Ss [R]^{\wedge}_p \to \Ss_{\W(R)}$ is a $p$-complete homological epimorphism.
\end{lem}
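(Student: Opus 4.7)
The plan is to apply Lemma \ref{prop_epi}(1) and reduce the claim to checking that the mod~$p$ reduction $T \otimes_\Ss \FF_p$ is a homological epimorphism of ordinary $\FF_p$-algebras. Both $\Ss[R]^\wedge_p$ and $\Ss_{\W(R)}$ are connective (the $p$-completion of a connective spectrum is connective, and $\Ss_{\W(R)}$ is by construction a connective lift of $\W(R)$), so the hypothesis of Lemma \ref{prop_epi}(1) is satisfied.

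Next I would identify the two base changes explicitly. On the source, since the $p$-completion functor becomes trivial after tensoring with $\FF_p$, we get $\Ss[R]^\wedge_p \otimes_\Ss \FF_p \simeq \Ss[R] \otimes_\Ss \FF_p \simeq \FF_p[R]$. On the target, $\Ss_{\W(R)}$ is a $p$-complete connective lift of $\W(R)$ and $\W(R)$ is $p$-torsion free with $\W(R)/p \cong R$ (because $R$ is a perfect $\FF_p$-algebra), so $\Ss_{\W(R)} \otimes_\Ss \FF_p \simeq \W(R) \otimes^L_\Z \FF_p \simeq R$, concentrated in degree zero.

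Under these identifications the induced map is the canonical surjection $t \colon \FF_p[R] \to R$ introduced at the start of the section; this is built into the construction of $T$ via the functoriality of spherical Witt vectors applied to $t$, and can be checked on $\pi_0$ where both maps recover the multiplication map of the monoid algebra. Finally, Lemma \ref{prop_epi}(2) tells us that any surjection of ordinary perfect $\FF_p$-algebras is a homological epimorphism, so $t$ is, and we conclude by Lemma \ref{prop_epi}(1) that $T$ is a $p$-complete homological epimorphism.

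The only mildly delicate point is identifying $\Ss_{\W(R)} \otimes_\Ss \FF_p$ with $R$ and verifying that the induced map is genuinely $t$; everything else is a direct citation of Lemma \ref{prop_epi}. I expect this identification — which amounts to tracking the functoriality of Lurie's spherical Witt vectors through the mod~$p$ reduction — to be the only substantive step in the argument.
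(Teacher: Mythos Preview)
Your proof is correct and follows exactly the paper's approach: reduce via Lemma~\ref{prop_epi}(1) to the base change over $\FF_p$, identify that map with the surjection $t\colon \FF_p[R]\to R$ of perfect $\FF_p$-algebras, and conclude by Lemma~\ref{prop_epi}(2). The paper's version is terser and takes the identifications $\Ss[R]^\wedge_p \otimes_\Ss \FF_p \simeq \FF_p[R]$ and $\Ss_{\W(R)} \otimes_\Ss \FF_p \simeq R$ (your ``mildly delicate point'') as already established from the discussion preceding the lemma, but otherwise the arguments are identical.
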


\begin{proof}
By (1) of the previous Lemma it suffices to check that $\FF_p[R] \to R$ is a homological epimorphism. But this is a surjective map of perfect $\FF_p$-algebras, so a homological epimorphism by (2).
\end{proof}

\begin{proof}[Proof of Theorem \ref{thm:spherical Witt}]
The completion $\Ss [R]^{\wedge}_I$ which we want to understand is the inverse limit of the corresponding Amitsur complex:
\[ \Ss [R]^{\wedge}_I = \lim_{\Delta}  \left( 
\xymatrix{ 
R\ar[r]<1.5pt>\ar[r]<-1.5pt> & R \otimes_{\Ss [R]} R \ar[r]<3pt>\ar[r]\ar[r]<-3pt> & R \otimes_{\Ss [R]} R \otimes_{\Ss [R]} R \ar[r]<1.5pt>\ar[r]<-1.5pt>\ar[r]<4.5pt> \ar[r]<-4.5pt> & \cdots 
} \right)
\]
Since $R$ is an $\FF_p$-algebra we may replace the tensor products by completed tensor products and then can replace the tensor products using (3) of Lemma \ref{prop_epi}  by tensor products over $\Ss_{\W(R)}$:
\[ \Ss [R]^{\wedge}_I = \lim_{\Delta}  \left( 
\xymatrix{ 
R\ar[r]<1.5pt>\ar[r]<-1.5pt> & R \otimes_{\Ss_{\W(R)}} R \ar[r]<3pt>\ar[r]\ar[r]<-3pt> & R \otimes_{\Ss_{\W(R)}} R \otimes_{\Ss_{\W(R)}} R \ar[r]<1.5pt>\ar[r]<-1.5pt>\ar[r]<4.5pt> \ar[r]<-4.5pt> & \cdots 
} \right)
\]
The cosimplicial diagram on the right-hand side is the Amitsur complex of the map $\Ss_{\W(R)} \to R$.
Using that $R = \FF_p  \otimes_{\Ss} \Ss_{\W(R)} $ we find that this cosimplicial diagram is also
given by the base-change of the Amitsur complex of $\Ss \to \FF_p$ along $\Ss \to \Ss_{\W(R)}$, i.e.
\[
\left( 
\xymatrix{ 
 \FF_p\otimes_\Ss \Ss_{\W(R)}  \ar[r]<1.5pt>\ar[r]<-1.5pt> &   \FF_p \otimes_\Ss \FF_p \otimes_\Ss \Ss_{\W(R)}\ar[r]<3pt>\ar[r]\ar[r]<-3pt> &   \FF_p \otimes_\Ss \FF_p \otimes_{\Ss} \FF_p  \otimes_\Ss\Ss_{\W(R)}\ar[r]<1.5pt>\ar[r]<-1.5pt>\ar[r]<4.5pt> \ar[r]<-4.5pt> & \cdots 
} \right)
\]
This is the Adams tower for  $\Ss_{\W(R)}$ 
which converges to  $\Ss_{\W(R)}$ since  $\Ss_{\W(R)}$ is connective and $p$-complete, see e.g. \cite[Theorem 6.6]{Bousfield}. This  completes the proof of the Theorem.
\end{proof}

\begin{remark}
The following observation is due to Tomer Schlank: Instead of considering the multiplicative monoid \((R, \cdot)\), one can take any \( p \)-divisible monoid \( M \) equipped with a surjective map $\mathbb{F}_p[M] \to R$ of rings. Under this assumption, the exact same proof shows that the conclusion of Theorem \ref{thm:spherical Witt} remains valid, yielding an equivalence  
\[
\Ss_{\W(R)} \simeq \Ss [M]^{\wedge}_I
\]  
where $I = \fib(\Ss [M] \to R)$. Similarly, other results in this paper continue to hold in this generality, with the necessary modifications.  
\end{remark}

\begin{proof}[Proof of Theorem \ref{completeness}]
By Lemma \ref{epi} and Lemma \ref{prop_epi} we find that the restriction $\Mod(\Ss_{\W(R)})^\wedge_p \to \Mod(\Ss[R])$ is fully faithful. The essential image is exactly given by those $p$-complete modules where the action extends to an $\Ss_{W(R)}$-action, in which case the extension is unique. 
Note that the left adjoint to restriction is given by 
\[
M \mapsto \Ss_{W(R)} \widehat{\otimes}_{\Ss[R]}  M \ .
\]
We want to show that for $M$ bounded below this agrees with the Bousfield--Kan completion
\[
M \to M^\wedge_I = \underleftarrow{\lim} \, M / I^nM \ . 
\]
To see this we compute $M^\wedge_I$ by the Amitsur complex
\[
M^{\wedge}_I = 
\lim_{\Delta}  \left( 
\xymatrix{ 
R \otimes_{\Ss [R]} M\ar[r]<1.5pt>\ar[r]<-1.5pt> & R \otimes_{\Ss [R]}R \otimes_{\Ss [R]} M \ar[r]<3pt>\ar[r]\ar[r]<-3pt> & R \otimes_{\Ss [R]}R \otimes_{\Ss [R]} R \otimes_{\Ss [R]} M \ar[r]<1.5pt>\ar[r]<-1.5pt>\ar[r]<4.5pt> \ar[r]<-4.5pt> & \cdots 
} \right) \ .
\]
Now we proceed similar to the proof of Theorem \ref{thm:spherical Witt}. The cosimplicial diagram is  equivalent to
\[
\lim_{\Delta}  
\left( 
\xymatrix{ 
R  \widehat{\otimes}_{\Ss [R]} M\ar[r]<1.5pt>\ar[r]<-1.5pt> & R \widehat{\otimes}_{\Ss_{W(R)}}R  \widehat{\otimes}_{\Ss [R]} M \ar[r]<3pt>\ar[r]\ar[r]<-3pt> & R  \widehat{\otimes}_{\Ss_{W(R)}}R  \widehat{\otimes}_{\Ss_{W(R)}} R  \widehat{\otimes}_{\Ss [R]} M \ar[r]<1.5pt>\ar[r]<-1.5pt>\ar[r]<4.5pt> \ar[r]<-4.5pt> & \cdots 
} \right)
\]
And thus by using that $R = \FF_p  \otimes_{\Ss} \Ss_{\W(R)} $ we see that this is given by the Adams tower of  $\Ss_{\W(R)}  \widehat{\otimes}_{\Ss [R]} M$:
\[
\left( 
\xymatrix{ 
 \FF_p\otimes_\Ss \Ss_{\W(R)}  \widehat{\otimes}_{\Ss [R]} M \ar[r]<1.5pt>\ar[r]<-1.5pt> &   \FF_p \otimes_\Ss \FF_p \otimes_\Ss \Ss_{\W(R)} \widehat{\otimes}_{\Ss [R]} M\ar[r]<3pt>\ar[r]\ar[r]<-3pt> 
 & \cdots 
} \right)
\]
which converges to $\Ss_{\W(R)}  \widehat{\otimes}_{\Ss [R]} M$ by boundedness. 

Now we let $M$ be arbitrary, that is not necessarily bounded below and show the implications that hold in this generality: \\
$\eqref{eins} \Rightarrow \eqref{fuenf}$: we have a factorization $\Ss[R] \to \Ss_{\W(R)} \to \mathrm{map}(M,M)$ as maps of spectra. Taking the mod $p$-reduction we get a factorization 
$\Ss[R] \to \Ss_{\W(R)}/p \to \mathrm{map}(M,M)/p$. We have that $\pi_0(\Ss_{\W(R)}/p) = R$ and the first map identifies on $\pi_0$ the elements $[r] + [s]$ and $[r+s]$ so that the claim follows.

$\eqref{fuenf} \Rightarrow \eqref{sechs} \Rightarrow \eqref{sieben}$ are clear, the latter since the homotopy groups of the derived mod $p$ reduction $\pi_n(M)/\!/p$ are given by $\pi_n(M)/p$ and $\pi_n(M)[p]$.

$\eqref{sieben} \Rightarrow \eqref{acht}$: by assumption we have that $\pi_n(M)/p$ and $\pi_n(M)[p]$ are in the essential image of the restriction. 
The image of the restriction functor is closed under limits, extensions and shifts. Thus also the derived mod $p$-reduction $\pi_n M / \! / p$ lies in the essential image. By taking iterated extensions, so do $\pi_n M / \! / p^k$. Since $\pi_n M$ is derived $p$-complete it is the inverse limit of $\pi_n M / \! / p^k$ over $k$ and therefore also in the essential image. 

$\eqref{acht} \Rightarrow \eqref{eins}$: Using the Postnikov tower of $M$ we see that it is a colimit-limit of extensions of $\pi_nM$ and thus the claim follows since the essential image of restriction is  closed under (co)limits, extensions and shifts.

$\eqref{zwei} \Rightarrow \eqref{drei}$: By assumption $M$ is an inverse limit of extensions of $R$-modules (considered as $\Ss[R]$-modules by restriction).
But $R$-modules are clearly mod $I$ local and therefore so is $M$.

$\eqref{drei} \Rightarrow \eqref{eins}$:  Since $\Ss[R] \to \Ss_{\W(R)}$ is a $p$-complete homological epimorphism we find that the restricted $p$-complete modules are the local objects for maps of $\Ss[R]$-modules which are equivalences after base-change to $\Ss_{\W(R)}$. Clearly these maps are still equivalences after base-change to $R$ since we have a factorization $\Ss[R] \to \Ss_{\W(R)} \to R$. Thus the mod $I$ local objects are also restricted along $\Ss[R] \to \Ss_{\W(R)}$.

$\eqref{eins} \Rightarrow \eqref{vier}$:  if $M$ is restricted along $\Ss[R]  \to \Ss_{\W(R)}$ then the base-change to $\mathbb{F}_p$ becomes a module over 
$\Ss_{\W(R)} \otimes_\Ss \mathbb{F}_p = R$, so that on the homology we have an additive $R$-action. 

Finally assume that $M$ is bounded below, then we show $\eqref{vier} \Rightarrow \eqref{eins}$: note that $R$-modules are certainly in the essential image and thus also the homology $M \otimes \mathbb{F}_p$ by assumption \eqref{vier} (using also condition \eqref{acht}) and also iterated homologies, i.e. $M \otimes \mathbb{F}_p \otimes ... \otimes \mathbb{F}_p$. Then by the convergence of the Bousfield Kan tower this implies that $M$ is a limit of restricted modules, hence restricted. 
\end{proof} 

\begin{proof}[Proof of Corollary \ref{cor_eins}]
By \cite[Corollary~3.4.1.7]{HA} 
the slice category of $\mathbb{E}_1$-algebras under $\Ss_{\W(R)}$ is equivalent to the $\infty$-category of $\mathbb{E}_1$-algebras in the monoidal $\infty$-category ${}_{\Ss_{\W(R)}} \Mod_{\Ss_{\W(R)}}$ of $\Ss_{\W(R)}$-bimodules equipped with the bimodule tensor product. Since a bimodule is simply a right module in left modules we deduce from Theorem \ref{completeness} that this category embeds by restriction (operadically) fully faithfully into the $\infty$-category of bimodules ${}_{\Ss[R]} \Mod_{\Ss[R]}$ and that the image consists of those $p$-complete modules $M$ such that the actions from both sides induced $R$-actions on $\pi_nM / p$ as well as $\pi_nM[p]$. For an algebra $A$ in ${}_{\Ss[R]} \Mod_{\Ss[R]}$ the multiplicative action of $R$ on $\pi_*A/p$ and $\pi_*A[p]$ from both sides factors over $\Z[R] \to \pi_0 A \to \pi_0A/p$. Thus it lies in the image from both sides precisely if the map $\Z[R] \to \pi_0 A \to \pi_0A/p$ factors over $R$. In summary we have proven that the category of $p$-complete  $\mathbb{E}_1$-algebras under $\Ss_{\W(R)}$ is equivalent to the category of $p$-complete  $\mathbb{E}_1$-algebras $A$ under $\Ss[R]$ with the property that the map $\Z[R] = \pi_0\Ss[R] \to \pi_0A /p$ factors over $R$. From this the claim follows immediately.  
\qedhere
\end{proof}

We thank Maxime Ramzi for explaining the proof of Proposition~\ref{cor_2} which we present here and for explaining the subsequent example to us. In the proof we will use repeatedly the following lemma.

\begin{lem}\label{square-zero}
Let $A$ be a $p$-complete $\mathbb{E}_n$-ring spectrum for $n \geqslant 2$ and $\tilde{A} \to A$ a square zero extension by a $p$-complete symmetric bimodule $P$. Then any $\mathbb{E}_1$-ring map $\Ss_{\W(R)} \to A$ has a unique lift to $\tilde{A}$ up to equivalence.
\end{lem}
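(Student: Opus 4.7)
The plan is a two-step reduction. First, by standard $\mathbb{E}_1$-obstruction theory for square-zero extensions, the fiber over a map $f$ of
\[
\Map_{\mathrm{Alg}_{\mathbb{E}_1}}(\Ss_{\W(R)}, \tilde A) \to \Map_{\mathrm{Alg}_{\mathbb{E}_1}}(\Ss_{\W(R)}, A)
\]
is controlled by the $\mathbb{E}_1$-derivation spaces $\mathrm{Der}_{\mathbb{E}_1}(\Ss_{\W(R)}/\Ss,\, f^*P[k])$: the obstruction to existence lies in $\pi_0$ of the $k=1$ version and, when non-empty, the fiber is a torsor over the $k=0$ version. It thus suffices to prove that $\mathrm{Der}_{\mathbb{E}_1}(\Ss_{\W(R)}/\Ss,\, M)$ is contractible for every $p$-complete symmetric $\Ss_{\W(R)}$-bimodule $M$. (Symmetry of $f^*P$ as an $\Ss_{\W(R)}$-bimodule follows from the symmetry of $P$ as an $A$-bimodule together with the $\mathbb{E}_\infty$-structure on $\Ss_{\W(R)}$.)

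Second, by definition $\mathrm{Der}_{\mathbb{E}_1}(\Ss_{\W(R)}/\Ss,\, M)$ is the fiber over $\mathrm{id}$ of $\Map_{\mathrm{Alg}_{\mathbb{E}_1}}(\Ss_{\W(R)},\, \Ss_{\W(R)} \oplus M) \to \Map_{\mathrm{Alg}_{\mathbb{E}_1}}(\Ss_{\W(R)},\, \Ss_{\W(R)})$. Since $M$ is a symmetric bimodule, $\Ss_{\W(R)} \oplus M$ is a $p$-complete $\mathbb{E}_2$-ring, so Corollary \ref{cor_eins} applies to both sides. Taking fibers corner by corner, the contractibility of the derivation space decomposes into three independent fiber-vanishing statements. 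For the ``ring'' and ``underlying monoid'' corners, the fibers are spaces of classical ring (resp.\ multiplicative monoid) derivations $R \to \pi_0 M /p$; these vanish outright because any such derivation $d$ satisfies $d(r) = d((r^{1/p})^p) = p \cdot (r^{1/p})^{p-1} d(r^{1/p}) = 0$, using perfectness of $R$ and that $p$ acts as $0$ on $\pi_0 M/p$.

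The main obstacle is the multiplicative $\mathbb{E}_1$-monoid corner: contractibility of the fiber over the canonical multiplicative inclusion $\iota \colon R \to \Omega^\infty \Ss_{\W(R)}$ of
\[
\Map_{\mathrm{Mon}_{\mathbb{E}_1}}(R,\, \Omega^\infty(\Ss_{\W(R)} \oplus M)) \to \Map_{\mathrm{Mon}_{\mathbb{E}_1}}(R,\, \Omega^\infty \Ss_{\W(R)}).
\]
Existence of a lift is immediate via the ring section $\Ss_{\W(R)} \hookrightarrow \Ss_{\W(R)} \oplus M$. For uniqueness on $\pi_0$, two $\mathbb{E}_1$-monoid lifts of $\iota$ differ by a multiplicative cocycle $r \mapsto p_r \in \Omega^\infty M$ satisfying $p_{rs} = \iota(r) \cdot p_s + p_r \cdot \iota(s)$ up to coherent homotopy; the $\mathbb{E}_2$-symmetry of $M$ (which genuinely requires $n \geq 2$) makes the left and right $\iota(r)$-actions on $\Omega^\infty M$ coherently agree, so iterating the cocycle identity yields $p_{r^{p^n}} = p^n \cdot \iota(r)^{p^n - 1} \cdot p_r$. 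Perfectness of $R$ allows substituting $r^{1/p^n}$ for $r$, showing that $p_r$ is divisible by $p^n$ for every $n$; since $M$ is $p$-complete, $p_r = 0$. The higher homotopy groups of the derivation space and the higher coherences of the $\mathbb{E}_1$-monoid lift vanish by applying the same argument to all shifts of $M$. This is the step where perfectness of $R$, $p$-completeness of $P$, and the assumption $n \geq 2$ all enter essentially.
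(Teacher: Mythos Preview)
Your reduction via Corollary~\ref{cor_eins} is correct in principle, but the argument at the $\mathbb{E}_1$-monoid corner has a real gap. What you show is that for any $\mathbb{E}_1$-derivation the \emph{values} $p_r \in \pi_0 M$ vanish (and your argument for this is fine: the compatible system $p_r = p^n \iota(r^{1/p^n})^{p^n-1} p_{r^{1/p^n}}$ gives a map $\Z[1/p] \to \pi_0 M$, which must be zero since $\pi_0 M$ is derived $p$-complete). But pointwise vanishing of $p_r$ does \emph{not} imply the derivation is null in $\pi_0$ of the derivation space: an $\mathbb{E}_1$-monoid map from $R$ carries coherence homotopies for the cocycle relation and all higher associativity cells, and these are not detected by the $p_r$. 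Your closing sentence conflates two different issues: ``higher homotopy groups of the derivation space'' can indeed be handled by replacing $M$ with $\Omega^k M$, but ``higher coherences of the $\mathbb{E}_1$-monoid lift'' live already inside $\pi_0$ of the derivation space, and shifting $M$ does nothing for them. Concretely, the fiber you are computing is $\mathrm{Der}_{\mathbb{E}_1}(\Ss[R]/\Ss, M)$, and there is no reason the evaluation map $\pi_0\,\mathrm{Der}_{\mathbb{E}_1}(\Ss[R]/\Ss, M) \to \prod_r \pi_0 M$ is injective.

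The paper's proof avoids this entirely: it identifies the space of lifts with nullhomotopies of a map $L^{\mathbb{E}_1}_{\Ss_{\W(R)}/\Ss} \to \Sigma P$, uses symmetry of $P$ to replace the bimodule cotangent complex by its symmetrisation $\fib(\Ss_{\W(R)} \to \mathrm{THH}(\Ss_{\W(R)}))$, and then observes that this fibre is $p$-adically trivial because $\mathrm{HH}_*(R/\FF_p) \simeq R$ for perfect $R$. This single THH computation handles all coherences simultaneously. It is worth noting that your route via Corollary~\ref{cor_eins} does not actually simplify anything: the $\mathbb{E}_1$-monoid corner is literally $\mathrm{Der}_{\mathbb{E}_1}(\Ss[R]/\Ss, M)$, which is the \emph{same} lemma you set out to prove, only for $\Ss[R]$ instead of $\Ss_{\W(R)}$; the correct proof of that is again the THH argument (using that $\FF_p[R]$ is perfect), at which point the detour through the pullback square buys nothing.
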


\begin{proof}
A lift to $\tilde{A}$ is the same data as a nullhomotopy of the corresponding map of bimodules (a derivation) 
$L_{\Ss_{\W(R)} / \Ss} \to \Sigma P$ where 
\[
L_{\Ss_{\W(R)} / \Ss} = \fib( \Ss_{\W(R)}  \otimes  \Ss_{\W(R)}  \to \Ss_{\W(R)})
\]
 is the $\mathbb{E}_1$-cotangent complex. Since $P$ is symmetric, such a bimodule map is equivalently a module map from the symmetrization of $L_{\Ss_{\W(R)} / \Ss}$ which is the fibre of the canonical map $\alpha \colon \Ss_{\W(R)} \to \mathrm{THH}(\Ss_{\W(R)})$. The map $\alpha$ is an $\FF_p$-equivalence  because $\mathrm{HH}_*(R; {\mathbb{F}_p}) \simeq R$ for a perfect ${\mathbb{F}_p}$-algebra $R$. Since $\alpha$ is a map between connective spectra, it is then also a $p$-adic equivalence, i.e.,  the fibre of $\alpha$ vanishes $p$-adically. Hence the space of nullhomotopies under consideration is trivial, so the space of lifts in the claim is contractible, and the claim follows. 
\end{proof}

\begin{proof}[Proof of Proposition \ref{cor_2}]
We may without loss of generality assume that $A$ is connective. We first prove the case $n = 1$.
By perfectness of $R$ we have that 
\[
\Map_{\mathrm{CRing}}( R , {\pi_0 A}^\flat) = 
\Map_{\mathrm{CRing}}( R , \pi_0(A)/p) = 
\Map_{\mathrm{Alg}_{\mathbb{E}_n}}( \Ss_{\W(R)} , \pi_0(A)/p) \ .
\]
Thus it suffices to shows that the natural map 
\begin{equation}\label{mapstoeq}
\Map_{\mathrm{Alg}_{\mathbb{E}_n}}( \Ss_{\W(R)} , A) \to \Map_{\mathrm{Alg}_{\mathbb{E}_n}}( \Ss_{\W(R)} , \pi_0(A)/p) 
\end{equation}
induced by $A \to \pi_0(A)/p$ is an equivalence. We will decompose it as \[A \to \pi_0(A) \to \pi_0(A)//p \to \pi_0(A)/p\] and show that at each step a lift of a map from $\Ss_{\W(R)}$ is unique up to equivalence. 

First, we consider the space of lifts of a map $\Ss_{\W(R)} \to \pi_0(A)$ to $A$. 
We write $A$ as the  limit of its Postnikov tower $A = \lim \tau_{\leq m} A$. It suffices to show that the lift is unique up to equivalence for each $\tau_{\leq m}A$. Note that each $\tau_{\leq m+1} A$ is a square zero extension of $\tau_{\leq m} A$ by $\Sigma^{m+1}(\pi_{m+1}A)$. 
The assumption that $\pi_0(A)$ lies in the center of $\pi_*(A)$ ensures that the bimodule $\pi_{m+1}A$ is symmetric, in the sense that it is induced by a left module (using that $\pi_0A$ is commutative). Thus we can apply Lemma~\ref{square-zero} to deduce the uniqueness of each lift.

Next, we consider the space of lifts of a map $\Ss_{\W(R)} \to \pi_0(A) // p$ to $\pi_0(A)$. 
Here we argue similarly: we write $\pi_0(A) = \lim \pi_0(A) /\!/ p^m$ by $p$-completeness, then observe that each $\pi_0(A) /\!/ p^{m+1}$ is a square zero extension of $\pi_0(A) /\!/ p^{m}$ by $\pi_0(A) // p$ and apply Lemma~\ref{square-zero}. 

Finally, we use that $\pi_0(A) // p$ is a square zero extension of $\pi_0(A) / p$ by $\Sigma \pi_0 A [p]$ and apply Lemma~\ref{square-zero} again.
This finishes the proof for $n = 1$. 

For $n > 1$ we argue inductively: using Dunn additivity the space of $\mathbb{E}_{n}$-maps $\Map_{\mathrm{Alg}_{\mathbb{E}_n}}( \Ss_{\W(R)} , A)$ can be written as the totalization of a cosimplicial diagram 
\[
[k] \mapsto \Map_{\mathrm{Alg}_{\mathbb{E}_{n-1}}}( (\Ss_{\W(R)})^{\hat{\otimes}_\Ss (k+1)} , A) \ .
\]
We have that $(\Ss_{\W(R)})^{\hat{\otimes}_{\Ss} (k+1)} = \Ss_{\W\left(R^{\otimes_{\mathbb{F}_p} (k+1) }\right)}$ so that by the inductive hypothesis this cosimplicial diagram is equivalent to 
\[
[k] \mapsto \Map_{\mathrm{Ring}}( R^{\otimes_{\mathbb{F}_p} (k+1)}  , {\pi_0 A}^\flat ) \ 
\]
which agrees with the space of $\mathbb{E}_n$-maps $R \to {\pi_0 A}^\flat$, which are the same as commutative ring maps by discreteness. This shows the case of finite $n$ and the case $n = \infty$ follows since it is the limit of all the spaces of $\mathbb{E}_n$-maps, hence the limit of a constant diagram. 
\end{proof}

\begin{remark}[Ramzi]\label{example_maxime}
From the proof of Proposition~\ref{cor_2} we can see that the assumption on $\pi_0(A)$ being in the center is necessary. Otherwise, let $A$ be a square zero extension of $\Ss_{\W(R)}$ by the (non-symmetric) bimodule $L_{\Ss_{\W(R)} / \Ss}$. If the claim of Proposition~\ref{cor_2} would hold for such $A$, it would imply that the space of bimodule maps $L_{\Ss_{\W(R)} / \Ss} \to \Sigma L_{\Ss_{\W(R)} / \Ss}$ is contractible. In particular, its $\pi_1$ would be trivial, so $\Sigma L_{\Ss_{\W(R)} / \Ss} = 0$. By definition of the $\mathbb{E}_1$-cotangent complex, this would imply that  $\Ss_{\W(R)}$ is idempotent. Hence $R = \Ss_{\W(R)} \otimes_\Ss \FF_p$ would also be  idempotent, which is not true unless $R = \FF_p$.

\end{remark}

\bibliographystyle{alphamod}

\let\mathbb=\mathbf

{\small
\bibliography{references}
}

\parskip 0pt

\end{document}